\DeclareSymbolFont{tipa}{T3}{cmr}{m}{n}
\DeclareMathAccent{\invbreve}{\mathalpha}{tipa}{16}
\newtheorem{theorem}{Theorem}[section]
\newtheorem{proposition}[theorem]{Proposition}
\theoremstyle{definition}
\theoremstyle{remark}
\newtheorem{remark}[theorem]{Remark}
\numberwithin{equation}{section}
\newcommand{\eps}{\varepsilon}
\newcommand{\R}{\mathbb R}
\newcommand{\cE}{\mathcal E}
\newcommand{\cL}{\mathcal L}
\newcommand{\cS}{\mathcal S}
\newcommand{\Lis}{\cL\mathrm{is}}
\newcommand{\identity}{\mathrm{Id}}
\DeclareMathOperator{\ran}{ran}
\DeclareMathOperator{\diam}{diam}
\DeclareMathOperator*{\argmin}{argmin}
\DeclareMathOperator{\essinf}{ess\,inf}
\DeclareMathOperator{\divv}{div}
\newcommand{\dif}{\mathop{}\!\mathrm{d}}
\newcommand{\nrm}{| \! | \! |}
\newcommand{\new}[1]{{#1}}
\newcommand{\be}{\begin{equation}}
\newcommand{\ee}{\end{equation}}
\newcommand{\tria}{{\mathcal T}}
\newcommand{\uumlaut}{{\"u}}
\par\begin{samepage}%
\title[Minimal residual space-time discretizations of parabolic equations]{Minimal residual space-time discretizations of parabolic equations: Asymmetric spatial operators}
\date{\today}
\author{Rob Stevenson, Jan Westerdiep}
\address{
Korteweg--de Vries (KdV) Institute for Mathematics, University of Amsterdam, P.O. Box 94248, 1090 GE Amsterdam, The Netherlands.
}
\email{r.p.stevenson@uva.nl, j.h.westerdiep@uva.nl}
\thanks{The second author has been supported by the Netherlands Organization for Scientific Research (NWO) under contract.~no.~613.001.652}
\subjclass[2020]{35K20, 41A25, 65M12, 65M15, 65M60, 35B25}
\keywords{Parabolic PDEs, space-time variational formulations, quasi-best approximations, stability, robustness}
\begin{document}

\begin{abstract} We consider a minimal residual discretization of a simultaneous space-time variational formulation of parabolic evolution equations. Under the usual `LBB' stability condition on pairs of trial- and test spaces we show quasi-optimality of the numerical approximations without assuming symmetry of the spatial part of the differential operator. Under a stronger LBB condition we show error estimates in an energy-norm that are independent of this spatial differential operator.
\end{abstract}

\maketitle
\section{Introduction}
This paper is about the numerical solution of parabolic evolution equations in a simultaneous space-time variational formulation.
Compared to classical time-stepping schemes, simultaneous space-time methods are much better suited for a massively parallel implementation (e.g.~\cite{234.7,306.65}), allow for local refinements in space and time (e.g.~\cite{249.4,75.38,249.991,306.6}), and produce numerical approximations from the employed trial spaces that are quasi-best.

The standard bilinear form that results from a space-time variational formulation is non-coercive, which makes it difficult to construct pairs of discrete trial and test spaces that inherit the stability of the continuous formulation. For this reason, in~\cite{11} R.~Andreev proposed to use minimal residual discretizations. They have an equivalent interpretation as Galerkin discretizations of an extended self-adjoint, but indefinite, mixed system having as secondary variable
 the Riesz lift of the residual of the primal variable w.r.t.~the PDE\@.

For pairs of trial spaces that satisfy a Ladyzhenskaja--Babu\u{s}ka--Brezzi (LBB) condition, it was shown that w.r.t.~the norm on the natural solution space, being an intersection of Bochner spaces, the Galerkin solutions are quasi-best approximations from the selected trial spaces. This LBB condition was verified in~\cite{11} for `full' and `sparse' tensor products of various finite element spaces in space and time. The sparse tensor product setting was then generalized in~\cite[Proposition 5.1]{249.991} to allow for local refinements in space and time whilst retaining (uniform) LBB stability.

A different minimal residual formulation of first order system type was introduced in~\cite{75.257}, see also~\cite{75.28}. Here the various residuals are all measured in $L_2$-norms, meaning that they do not have to be introduced as separate variables, and the resulting bilinear form is coercive.

Closer in spirit to~\cite{11} are the space-time methods presented in~\cite{249.2,169.05,35.825}, in which error bounds are presented w.r.t.~mesh-dependent norms.
In~\cite{64.5765,249.3} space-time variational methods are presented that lead to coercive bilinear forms based on fractional Sobolev norms of order $\frac12$. A first order space-time DPG formulation of the heat equation is presented in~\cite{64.585}.

A restriction imposed in~\cite{11}, as well as in the other mentioned references apart from~\cite{35.825, 75.28}, is that the spatial part of the PDO is not only coercive but also symmetric. In~\cite{249.99} we could remove the symmetry condition for the analysis of a related Br\'{e}zis--Ekeland--Nayroles (BEN) (\cite{35.81,233.5}) formulation of the parabolic PDE\@.
In the current work, we prove that also for the minimal residual (MR) method the symmetry condition can be dropped.
So for both MR and BEN we show that under the aforementioned LBB condition the Galerkin approximations are quasi-optimal, where the bound on the error in the numerical approximation for BEN improves upon the one from~\cite{249.99}.

The error bounds for both MR and BEN degrade for increasing asymmetry. This is not an artefact of the theory but is confirmed by numerical experiments.
Under a stronger LBB condition on the pair of trial spaces, however, we will prove that the MR and BEN approximations are quasi-best w.r.t.~a continuous, i.e.,  `mesh-\emph{in}dependent', energy-norm, uniformly in the spatial PDO\@.

We present numerical tests for the evolution problem governed by the simple PDE $\partial_t-\eps\partial_x^2+\partial_x+e\identity$ on $(0,1)^2$ with initial and boundary conditions, where $e$ is either $0$ or $1$.
For the case that homogeneous Dirichlet boundary conditions are prescribed at the outflow boundary $x=1$, the results for very small $\eps$
illustrate that quasi-optimal approximations do no necessarily mean accurate approximations. Indeed the error in the computed solution is large because of the unresolved boundary layer. The minimization of the error in the energy-norm of least squares type causes a global spread of the error along the streamlines. We tackled this problem by imposing these boundary conditions only weakly.

\subsection{Organization}
In Sect.~\ref{S2} we recall the well-posed space-time variational formulation of the parabolic problem and study its conditioning.
Under the usual LBB condition, in Sect.~\ref{S3} we show quasi-optimality of the MR method without assuming symmetry of the spatial differential operator.
A similar result is shown for BEN in Sect.~\ref{S4}.
Known results concerning the verification of this LBB condition are summarized in Sect.~\ref{S5}, together with results about optimal preconditioning.

In Sect.~\ref{S6} we equip the solution space with an energy-norm, and, under a stronger LBB condition, show error estimates for MR and BEN that are independent of the spatial differential operator.
We present an a posteriori error estimator that, under an even stronger  LBB condition, is efficient and, modulo a date-oscillation term, is reliable.

In Sect.~\ref{Sconcrete} we apply the general theory to the example of the  convection-diffusion problem.
We give pairs of trial- and test spaces that satisfy the 2nd and 3rd mentioned LBB conditions. Finally, in Sect.~\ref{Snumer} we present numerical results for the MR method in the simple case of having a one-dimensional spatial domain. To solve the problems caused by an unresolved boundary layer, we modify the method by imposing a boundary condition weakly.

\subsection{Notations}\label{ssec:1.2}
In this work, by $C \lesssim D$ we will mean that $C$ can be bounded by a multiple of $D$, independently of parameters that $C$ and $D$ may depend on.
Obviously, $C \gtrsim D$ is defined as $D \lesssim C$, and $C\eqsim D$ as $C\lesssim D$ and $C \gtrsim D$.

For normed linear spaces $E$ and $F$, by $\cL(E,F)$ we will denote the normed linear space of bounded linear mappings $E \rightarrow F$,
and by $\Lis(E,F)$ its subset of boundedly invertible linear mappings $E \rightarrow F$.
We write $E \hookrightarrow F$ to denote that $E$ is continuously embedded into $F$.
For simplicity only, we exclusively consider linear spaces over the scalar field $\R$.

\section{Well-posed variational formulation}\label{S2}
Let $V,H$ be separable Hilbert spaces of functions on some ``spatial domain''
such that $V \hookrightarrow H$ with dense embedding.
Identifying $H$ with its dual, we obtain the Gelfand triple
$V \hookrightarrow H \simeq H' \hookrightarrow V'$. We use $\langle \cdot,\cdot\rangle$ to denote both the scalar product on $H \times H$ as well as its unique extension to the duality pairing on $V' \times V$ or $V \times V'$, and denote the norm on $H$ by $\|\cdot\|$.

For a.e.
$$
t \in I:=(0,T),
$$
let $a(t;\cdot,\cdot)$ denote a bilinear form on $V \times V$ such that for
any $\eta,\zeta \in V$, $t \mapsto a(t;\eta,\zeta)$ is measurable on $I$,
and such that for some $\varrho \in \R$, for
a.e. $t\in I$,
\begin{alignat}{3} \label{boundedness}
|a(t;\eta,\zeta)|
& \lesssim  \|\eta\|_{V} \|\zeta\|_{V} \quad &&(\eta,\zeta \in V) \quad &&\text{({\em boundedness})},
\\ \label{garding}
 a(t;\eta,\eta) +\varrho\langle \eta,\eta\rangle &\gtrsim \|\eta\|_{V}^2 \quad
&&(\eta \in {V}) \quad &&\text{({\em G{\aa}rding inequality})}.
\end{alignat}

With $A(t) \in \Lis({V},V')$ being defined by $ (A(t) \eta)(\zeta):=a(t;\eta,\zeta)$, given a forcing function $g$ and an initial value $u_0$, we are interested in solving the {\em parabolic initial value problem} to finding $u$ such that
\begin{equation} \label{11}
\left\{
\begin{array}{rl}
\frac{\dif u}{\dif t}(t) +A(t) u(t)&\!\!\!= g(t) \quad(t \in I),\\
u(0) &\!\!\!= u_0.
\end{array}
\right.
\end{equation}

In a simultaneous space-time variational formulation, the parabolic PDE reads as finding $u$ from a suitable space of functions $X$ of time and space such that
$$
(Bw)(v):=\int_I
\langle { \textstyle \frac{\dif w}{\dif t}}(t), v(t)\rangle +
a(t;w(t),v(t)) \dif t = \int_I
\langle g(t), v(t)\rangle\dif t =:g(v)
$$
for all $v$ from another suitable space of functions $Y$ of time and space.
One  possibility to enforce the initial condition is by testing it against additional test functions. A proof of the following result can be found in \cite{247.15}, \new{cf.
\cite[Ch.~3, Thm.~4.1]{185},
\cite[Ch.~IV, \S26]{314.9},
\cite[Ch.XVIII, \S3]{63}, and
\cite[Thm.~6.6]{eg04}} for similar statements.

\begin{theorem} \label{thm0} With $X:=L_2(I;{V}) \cap H^1(I;V')$, $Y:=L_2(I;{V})$,
under conditions \eqref{boundedness} and \eqref{garding} it holds that
$$
\new{(B,\gamma_0)} \in \Lis(X,Y' \times H),
$$
where for $t \in \bar{I}$, $\gamma_t\colon u \mapsto u(t,\cdot)$ denotes the trace map.
That is, assuming $g \in Y'$ and $u_0 \in H$, finding $u \in X$ such that
\be \label{x12}
\new{(B,\gamma_0) =(g,u_0)}
\ee
is a well-posed simultaneous space-time variational formulation of \eqref{11}.
\end{theorem}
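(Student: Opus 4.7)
The plan is to follow the standard Hilbertian route for parabolic problems: first reduce to a coercive spatial form, then prove boundedness of $(B,\gamma_0)$ together with an a priori/inf-sup bound, and finally obtain surjectivity by a Faedo--Galerkin construction. For the reduction step, the substitution $\tilde u(t):=e^{-\varrho t}u(t)$ is an isomorphism of $X$ onto itself (and of $Y$ onto itself) which transforms $a(t;\cdot,\cdot)$ into $a(t;\cdot,\cdot)+\varrho\langle\cdot,\cdot\rangle$ as the new spatial form, so by \eqref{garding} this new form is uniformly $V$-coercive, while $\gamma_0$ is preserved up to a harmless factor. I may therefore assume $\varrho=0$, i.e., $a(t;\eta,\eta)\gtrsim\|\eta\|_V^2$ for a.e.\ $t\in I$.

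Next I would establish the continuous embedding $X\hookrightarrow C(\bar I;H)$ together with the integration-by-parts identity
\[
\|\gamma_t u\|^2-\|\gamma_s u\|^2=2\int_s^t\langle u'(r),u(r)\rangle\dif r\qquad(0\le s\le t\le T),
\]
first proved for $u\in C^1(\bar I;V)$ and extended by density to $u\in X$. This embedding delivers $\gamma_t\in\cL(X,H)$ for every $t\in\bar I$, which, together with boundedness \eqref{boundedness} of $a$ and of $\dif/\dif t\colon X\to L_2(I;V')$, yields $(B,\gamma_0)\in\cL(X,Y'\times H)$. For boundedness below, I would test $B$ with $u$ itself: the identity above at $s=0$, $t=T$ combined with coercivity gives
\[
(Bu)(u)=\tfrac12\|\gamma_T u\|^2-\tfrac12\|\gamma_0 u\|^2+\int_I a(t;u(t),u(t))\dif t\gtrsim \|u\|_{L_2(I;V)}^2-\tfrac12\|\gamma_0 u\|^2,
\]
and since $(Bu)(u)\le\|Bu\|_{Y'}\|u\|_{L_2(I;V)}$, Young's inequality produces $\|u\|_{L_2(I;V)}\lesssim\|Bu\|_{Y'}+\|\gamma_0 u\|$. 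The bound on $\|u'\|_{L_2(I;V')}$ then follows from the identification $u'=Bu-A(\cdot)u$ in $L_2(I;V')$ combined with boundedness of $A(t)\colon V\to V'$, so altogether $\|u\|_X\lesssim\|(Bu,\gamma_0 u)\|_{Y'\times H}$. This proves injectivity, closed range, and continuity of the inverse on the range.

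It remains to prove surjectivity of $(B,\gamma_0)$, and this is where I expect the main obstacle. I would carry it out by a Faedo--Galerkin construction: pick a Galerkin basis $\{\varphi_k\}\subset V$ with $\Span\{\varphi_k\}$ dense in $V$ (for instance the eigenbasis of a coercive symmetric isomorphism $V\to V'$), solve the finite-dimensional ODE system obtained by testing against $\varphi_1,\ldots,\varphi_n$ with initial datum the orthogonal projection of $u_0$ onto $\Span\{\varphi_1,\ldots,\varphi_n\}$, apply the a priori bound of the previous paragraph at the discrete level to obtain a sequence bounded in $X$, pass to a weak limit, and verify it satisfies \eqref{x12}. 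A more abstract alternative is the Banach closed-range theorem: it suffices to show that the adjoint of $(B,\gamma_0)$ is injective, which corresponds to well-posedness of a backward parabolic problem with terminal trace $\gamma_T$ in place of $\gamma_0$ and admits a symmetric treatment. Either route is unavoidable, since the inf-sup estimate alone does not produce solutions for arbitrary data $(g,u_0)\in Y'\times H$.
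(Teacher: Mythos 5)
The paper offers no proof of Theorem~\ref{thm0}: it delegates it entirely to the literature (\cite{247.15}, with \cite{185,314.9,63,eg04} for similar statements), so there is no in-paper argument to compare yours against. What you have written is precisely the classical route taken in those references: reduction to the coercive case by the exponential substitution, the embedding $X\hookrightarrow C(\bar I;H)$ with the integration-by-parts identity, the a priori bound $\|u\|_X\lesssim\|Bu\|_{Y'}+\|\gamma_0u\|$ obtained by testing with $u$ and recovering $u'=Bu-Au$ from the equation, and surjectivity by Faedo--Galerkin or, equivalently, by closed range plus injectivity of the adjoint (uniqueness for the backward problem with terminal condition at $T$). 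All of these steps are sound, and you correctly identify surjectivity as the one step that the inf-sup estimate alone cannot deliver.

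One technical point deserves care in the Galerkin route: the discrete a priori bound does not immediately control $\|u_n'\|_{L_2(I;V')}$, because the discrete equation identifies $u_n'$ only against the finite-dimensional test space; one either needs the $H$-orthogonal projector onto $\Span\{\varphi_1,\dots,\varphi_n\}$ to be uniformly bounded on $V$ and $V'$ (which your choice of the eigenbasis of a symmetric coercive isomorphism $V\to V'$ guarantees), or one passes to the limit in $L_2(I;V)$ weakly first and recovers $u'=g-A(\cdot)u\in L_2(I;V')$ a posteriori from the limit equation. In the adjoint route, the analogous point is that one must first upgrade the dual variable $v\in Y$ to $v\in X$ via the distributional identity $v'=A(\cdot)'v$ before the backward energy argument applies. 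With either of these details filled in, the proof is complete.
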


With $\tilde{u}(t):=u(t) e^{-\varrho t}$, \eqref{11} is equivalent to $
\frac{\dif \tilde{u}}{\dif t}(t) +(A(t)+\varrho \identity) \tilde{u}(t)= g(t)e^{-\varrho t}$  ($t \in I$),
$\tilde{u}(0) = u_0$. Since $((A(t)+\varrho \identity)\eta)(\eta) \gtrsim \|\eta\|_{V}^2$, w.l.o.g.~we will always assume that,
besides \eqref{boundedness},
\eqref{garding} is valid for $\varrho=0$, i.e., for a.e.~$t \in I$,
\be \label{coercivity}
 a(t;\eta,\eta) \gtrsim \|\eta\|_{V}^2 \quad
(\eta \in {V}) \quad \text{({\em coercivity})}.
\ee

We define $A, A_s \in \Lis(Y,Y')$, $A_a \in \cL(Y,Y')$, and $C, \partial_t \in \cL(X,Y')$ by
\begin{align*}
&(Aw)(v):=\int_I a(t;w(t),v(t))\dif t,\quad
A_s:={\textstyle  \frac12}(A+A'), \quad A_a:={\textstyle \frac12}(A-A'),\\
&\partial_t:=B-A,\quad C:=B-A_s=\partial_t + A_a,
\end{align*}
and equip $Y$ with `energy'-scalar product $\langle\cdot,\cdot\rangle_Y:=(A_s \cdot)(\cdot)$, and norm
$$
\|v\|_Y:=\sqrt{(A_s v)(v)}.
$$
being, thanks to \eqref{boundedness} and \eqref{coercivity}, equivalent to the standard norm on $Y$.
Equipping $Y'$ with the resulting dual norm, $A_s \in \Lis(Y,Y')$ is an isometric isomorphism, and so for $f \in Y'$ we have
$$
f(A_s^{-1}f)=(A_s A_s^{-1}f)(A_s^{-1}f)=\|A_s^{-1}f\|^2_{Y}=\|f\|_{Y'}^2.
$$

For some constant $\beta \geq 1$, we equip $X$ with norm
$$
\|\cdot\|_X:=\sqrt{\|\cdot\|_Y^2+\|\partial_t \cdot\|_{Y'}^2+\|\gamma_T \cdot\|^2+(\beta-1) \|\gamma_0 \cdot\|^2},
$$
being, thanks to $X \hookrightarrow C(\overline{I};H)$, equivalent to the standard norm on $X$.
In addition, we define the energy-norm on $X$ by
$$
\nrm \cdot\nrm_X:=\sqrt{\|B \cdot\|_{Y'}^2+\beta\|\gamma_0 \cdot\|^2},
$$
which, thanks to Theorem~\ref{thm0}, is indeed a norm on $X$.

\begin{proposition} \label{prop:continuous} With $\alpha:=\|A_a\|_{\cL(Y,Y')}$, for $0 \neq w \in X$ it holds that
$$
\Big(1+\frac{\alpha}{2}\big(\alpha+\sqrt{\alpha^2+4}\,\big)\Big)^{-1} \leq \frac{\nrm w\nrm_X^2}{\|w\|_X^2}
\leq 1+\frac{\alpha}{2}\big(\alpha+\sqrt{\alpha^2+4}\,\big),
$$
so that, in particular, both norms are equal when $A_a=0$.
\end{proposition}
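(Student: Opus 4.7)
The plan is to expand $\|Bw\|_{Y'}^2$ using the decomposition $B=A_s+C$ and to exploit that $A_s$ is an isometric isomorphism $Y\to Y'$. The cross term $2\langle A_sw,Cw\rangle_{Y'}$ then collapses to $2(Cw)(w)$, using that $(A_s w)(A_s^{-1}\varphi)=\varphi(w)$ by symmetry of $A_s$. Since $A_a$ is antisymmetric we have $(A_aw)(w)=0$, and integration by parts in time yields $(\partial_tw)(w)=\tfrac12(\|\gamma_Tw\|^2-\|\gamma_0w\|^2)$, so the cross term is purely a boundary contribution. The resulting identity
$$
\nrm w\nrm_X^2=\|w\|_Y^2+\|Cw\|_{Y'}^2+\|\gamma_Tw\|^2+(\beta-1)\|\gamma_0w\|^2
$$
has trace contributions that match those of $\|w\|_X^2$ term by term.

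Cancelling the common trace part $x:=\|\gamma_Tw\|^2+(\beta-1)\|\gamma_0w\|^2\geq 0$, the quotient takes the form $(N+x)/(D+x)$ with $N:=\|Cw\|_{Y'}^2+\|w\|_Y^2$ and $D:=\|\partial_tw\|_{Y'}^2+\|w\|_Y^2$. Since $x\mapsto(N+x)/(D+x)$ is monotone in $x$ with limit $1$ as $x\to\infty$, the quotient always lies between $1$ and $N/D$, so it suffices to bound both $N/D$ and $D/N$ by $K:=1+\tfrac{\alpha}{2}(\alpha+\sqrt{\alpha^2+4})$.

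From $C=\partial_t+A_a$ and $\|A_aw\|_{Y'}\leq\alpha\|w\|_Y$ one gets $\bigl|\|Cw\|_{Y'}-\|\partial_tw\|_{Y'}\bigr|\leq\alpha\|w\|_Y$, so both bounds reduce, after the substitution $t:=\|\partial_tw\|_{Y'}/\|w\|_Y$ (and its mirror; the case $\|w\|_Y=0$ forces $w=0$ via $X\hookrightarrow C(\bar I;H)$), to the elementary estimate
$$
\sup_{t\geq 0}\frac{(t+\alpha)^2+1}{t^2+1}\leq K.
$$
Setting the derivative to zero leads to $t^2+\alpha t-1=0$ with positive root $t^*=\tfrac12(\sqrt{\alpha^2+4}-\alpha)$; substituting back and using $t^{*2}=1-\alpha t^*$ the supremum evaluates in closed form to $\tfrac14(\sqrt{\alpha^2+4}+\alpha)^2$, which equals $K$.

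The main obstacle is the first step: decomposing $B=A_s+C$, recognising that the energy scalar product on $Y$ makes $A_s$ an isometry so that the dual cross term can be rewritten as a duality pairing, and collapsing that pairing to a boundary contribution via antisymmetry of $A_a$ and integration by parts. Once this identity is in place the remaining argument is a one-variable calculus problem, and the sharpness of the constant is visible from the fact that the supremum in the elementary estimate is attained.
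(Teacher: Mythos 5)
Your proposal is correct and follows essentially the same route as the paper: the identity $\nrm w\nrm_X^2=\|Cw\|_{Y'}^2+\|w\|_Y^2+\|\gamma_Tw\|^2+(\beta-1)\|\gamma_0w\|^2$ is exactly the paper's key expansion of $B'A_s^{-1}B+\beta\gamma_0'\gamma_0$ (obtained there at the operator level via $B=C+A_s$ and $\partial_t+\partial_t'+\gamma_0'\gamma_0=\gamma_T'\gamma_T$), and your explicit maximization of $\bigl((t+\alpha)^2+1\bigr)/(t^2+1)$ is equivalent to the paper's Young's inequality with the parameter $\eta$ chosen to balance the two coefficients. The only (harmless) presentational difference is that you discard the common trace terms by a monotonicity argument in $x$, whereas the paper keeps them and notes their coefficient $1$ lies between the two extremal constants.
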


\begin{proof}
Using that for  $w,v \in X$,
\begin{align*}
((\partial_t+\partial_t'+\gamma_0'\gamma_0)w)(v)&=
\int_I \langle { \textstyle \frac{\dif w}{\dif t}}(t), v(t)\rangle+\langle w(t), { \textstyle \frac{\dif v}{\dif t}}(t)\rangle\dif t+\langle w(0), v(0)\rangle\\
&=
\int_I {\textstyle \frac{\dif}{\dif t}} \langle w(t),v(t)\rangle\dif t+\langle w(0),v(0)\rangle=(\gamma_T'\gamma_Tw)(v),
\end{align*}
we find that
\be \label{BC}
\begin{split}
B' A_s^{-1} B + \beta \gamma_0' \gamma_0&=(C'+A_s)A_s^{-1}(C+A_s)+ \beta \gamma_0' \gamma_0\\
&=C' A_s^{-1} C+A_s+C'+C+\beta\gamma_0' \gamma_0\\
&=C' A_s^{-1} C+A_s+\partial_t'+\partial_t+\beta \gamma_0' \gamma_0\\
&=C' A_s^{-1} C + A_s +\gamma_T' \gamma_T+(\beta-1) \gamma_0' \gamma_0.
\end{split}
\ee
For $w \in X$,
$$
(C' A_s^{-1} C w)(w)=(Cw)(A_s^{-1} Cw)=\|(\partial_t+A_a)w\|_{Y'}^2\leq (\|\partial_t w\|_{Y'}+\alpha \|w\|_{Y})^2,
$$
and so, for any $\eta \neq 0$, Young's inequality shows that
\begin{align*}
\|Bw\|_{Y'}^2&+\beta\|\gamma_0 w\|^2=\big((C' A_s^{-1} C + A_s +\gamma_T' \gamma_T+(\beta-1) \gamma_0' \gamma_0)(w)\big)(w) \\
& \leq (1+\eta^2)\|\partial_t w\|_{Y'}^2+((1+\eta^{-2})\alpha^2+1)\|w\|_Y^2+\|\gamma_T w\|^2+(\beta-1)\|\gamma_0 w\|^2.
\end{align*}
Solving $(1+\eta^2)=(1+\eta^{-2})\alpha^2+1$ gives $1+\eta^2=1+{\textstyle \frac{\alpha}{2}}\big(\alpha+\sqrt{\alpha^2+4}\,\big)$, showing one of the bounds of the statement.

From
$$
\|(\partial_t+A_a)w\|_{Y'}^2\geq (\|\partial_t w\|_{Y'}-\alpha \|w\|_{Y})^2\geq (1-\eta^2)\|\partial_t w\|^2_{Y'}+(1-\eta^{-2})\alpha^2 \|w\|_{Y}^2
$$
again by Young's inequality, by solving $\eta^2$ from $1-\eta^2=(1-\eta^{-2})\alpha^2+1$ the other bound follows.
\end{proof}

\begin{remark} \label{relative}
Because $\|\cdot\|_Y$ is defined in terms of the symmetric part $A_s$ of the spatial differential operator $A$,
 $\alpha=\|A_a\|_{\cL(Y,Y')}$ is a measure for the \emph{relative} asymmetry of the operator $A$.
 Indeed
$\|A_a\|_{\cL(Y,Y')}=\|A_s^{-\frac12} A_aA_s^{-\frac12} \|_{\cL(L_2(I;H),L_2(I;H))}=\rho(A_s^{-\frac12} A'_a A_s^{-1} A_a A_s^{-\frac12})^{\frac12}=\rho(A_s^{-1} A_a A_s^{-1} A_a)^{\frac12}$, where we used that $A_a' = -A_a$.
 \end{remark}

 \new{A result on the conditioning of $(B,\gamma_0) \in \Lis(X,Y'\times H)$ similar to Proposition~\ref{prop:continuous} but w.r.t.~different norms on $X$ and $Y$ can be found in \cite[Lemmas~71.1 \& 71.2]{70.99}.}

\section{Minimal residual (MR) method} \label{S3}
Let $(X^\delta,Y^\delta)_{\delta \in \Delta}$ a family of closed, non-zero subspaces of $X$ and $Y$, respectively. For $\delta \in \Delta$, let $E_X^\delta$ and $E_Y^\delta$ denote the trivial embeddings $X^\delta \rightarrow X$ and $Y^\delta \rightarrow Y$, which we \new{sometimes} write for clarity, \new{but that we mainly introduce because of their duals}. We assume that
\begin{align} \label{cond1}
&X^\delta \subseteq Y^\delta\quad(\delta \in \Delta),\\
\label{cond2}
&\gamma_\Delta^{\partial_t}:=\inf_{\delta \in \Delta} \inf_{\{w \in X^\delta\colon \partial_t E_X^\delta w \neq 0\}} \frac{\|{E_Y^\delta}' \partial_t E_X^\delta w\|_{{Y^\delta}'}}{\|\partial_t E_X^\delta w\|_{Y'}} >0.
\end{align}

Furthermore, for efficiency reasons we assume to have available a $K_Y^\delta={K_Y^\delta}' \in \Lis({Y^\delta}',Y^\delta)$ (a `preconditioner'), such that for some constants $0<r_\Delta\leq R_\Delta<\infty$,
\be \label{cond3}
\frac{((K_Y^\delta)^{-1} v)(v)}{({E^\delta_Y}'A_s E^\delta_Y v)(v)} \in [r_\Delta,R_\Delta] \quad(\delta \in \Delta,\,v \in Y^\delta),
\ee
or, equivalently,
$\frac{f(K_Y^\delta f)}{f(({E^\delta_Y}'A_s E^\delta_Y)^{-1}f)} \in [R_\Delta^{-1},r_\Delta^{-1}]$ ($\delta \in \Delta,f \in {Y^\delta}'$).

Noticing that $\|f\|_{{Y^\delta}'}^2=f(({E^\delta_Y}'A_s E^\delta_Y)^{-1}f)$, the expression
$$
\|\cdot\|_{K_Y^\delta}:=\sqrt{(\cdot)(K_Y^\delta \cdot)}
$$
defines an equivalent norm on ${Y^\delta}'$, and our Minimal Residual approximation $u^\delta \in X^\delta$ of the solution $u \in X$ of \eqref{x12} is defined as
\be \label{ls}
u^\delta:=\argmin_{w \in X^\delta} \|{E_Y^\delta}'(B E_X^\delta w-g)\|_{K_Y^\delta}^2+\beta \|\gamma_0 E_X^\delta w-u_0\|^2,
\ee
for some constant $\beta \geq 1$. Later we will see that, thanks to \eqref{cond2} and \eqref{cond3},
\be \label{e1}
\inf_{0\neq w \in X^\delta} \sup_{(v_1,v_2) \in Y^\delta \times H} \frac{(B E_X^\delta w)(E_Y^\delta  v_1)+\beta \langle \gamma_0 E_X^\delta w,v_2\rangle}{\sqrt{((K_Y^\delta)^{-1} v_1)(v_1)+\beta\|v_2\|^2}}>0
\ee
(even uniformly in $\delta \in \Delta$)\footnote{This follows by combining \eqref{e7}, \eqref{e8}, and \eqref{e9}.} which implies that \eqref{ls} has a unique solution.
The numerical approximation \eqref{ls} was proposed in \cite{11}\footnote{In \cite{11}, the norm $\|\gamma_0 E_X^\delta w-u_0\|$ reads as $\sup_{0 \neq z \in Z^\delta}\frac{\langle\gamma_0 E_X^\delta w-u_0,z\rangle}{\|z\|}$ for some $H \supseteq Z^\delta \supseteq \ran\gamma_0|_{X^\delta}$ which generalization seems not very helpful.}, and further investigated in \cite{249.99}.
In both these references the analysis of the MR method was restricted to the case that $A_a=0$.
The introduction of the parameter $\beta \geq 1$ allows to appropriately weight both terms in the least squares minimization.

The solution $u^\delta$ of the MR problem is the solution of the resulting Euler--Lagrange equations, which read as
\be \label{MRM}
({E_X^\delta}' B' E_Y^\delta K_Y^\delta {E_Y^\delta}' B E_X^\delta+{E_X^\delta}' \beta \gamma_0' \gamma_0 E_X^\delta) u^\delta=
{E_X^\delta}' B' E_Y^\delta K_Y^\delta {E_Y^\delta}' g+{E_X^\delta}'\beta  \gamma_0' u_0,
\ee
as also the second component of the solution $(\mu^\delta,u^\delta) \in Y^\delta \times X^\delta$ of
\be \label{m9}
\left[\begin{array}{@{}ccc@{}}(K_Y^\delta)^{-1} & {E_Y^{\delta}}' B E^\delta_X\\ {E^\delta_X}' B' E_Y^{\delta}& -{E^\delta_X}' \beta \gamma_0' \gamma_0 E^\delta_X \end{array}\right]
\left[\begin{array}{@{}c@{}} \mu^{\delta} \\ u^{\delta} \end{array}\right]=
\left[\begin{array}{@{}c@{}} {E^{\delta}_Y}' g \\ -{E^\delta_X}'\beta  \gamma_0' u_0 \end{array}\right],
\ee
being a useful representation when no efficient preconditioner is available and one has to resort to $(K_Y^\delta)^{-1}={E^\delta_Y}'A_s E^\delta_Y$.

With the ``projected'' or ``approximate''  (because generally $Y^\delta \neq Y$) \emph{trial-to-test operator} $T^\delta \new{=(T^\delta_1,T^\delta_2)} \in \cL(X,Y^\delta \times H)$ defined by
\be \label{e3}
((K_Y^\delta)^{-1} T_{\new{1}}^\delta w)(v_1)+\beta\langle T_{\new{2}}^\delta w,v_2\rangle=(B w)(E_Y^\delta v_1)+\beta \langle \gamma_0 u,v_2\rangle \quad((v_1,v_2) \in Y^\delta \times H),
\ee
and \new{the} ``projected'' or ``approximate'' \emph{optimal test space} $Z^\delta:=\ran T^\delta|_{X^\delta}$, a third equivalent formulation of \eqref{ls}  (see e.g.~\cite{64.14}, \cite[Prop.~2.2]{35.8565}, \cite{64.17})  is finding $u^\delta \in X^\delta$ that solves the Petrov--Galerkin system
\be \label{e2}
(B E_X^\delta u^\delta)(E_Y^\delta v_1)+\beta \langle \gamma_0 E_X^\delta  u^\delta,v_2\rangle =g(E_Y^\delta v_1)+\beta \langle u_0,v_2\rangle \quad((v_1,v_2) \in Z^\delta).
\ee
Note that~\eqref{e2} avoids the `normal equations'~\eqref{MRM}. It
will allow us to derive a quantitatively sharp estimate for the error in $u^\delta$.
From \eqref{cond3} and  \eqref{e1}, one infers that $\sup_{0 \neq w \in X^\delta} \frac{\|T^\delta w\|_{Y \times H}}{\|w\|_X}>0$, so that, thanks to $X^\delta$ being closed, $Z^\delta$ is a closed subspace of $Y^\delta \times H$.
We orthogonally decompose $Y^\delta \times H$ into $Z^\delta$ and $(Z^\delta)^\perp$, where here we equip $Y^\delta$ with inner product $((K_Y^\delta)^{-1} \cdot)(\cdot)$.
From \eqref{e3} one infers that for $w \in X^\delta$ and $(v_1,v_2) \in (Z^\delta)^\perp$, it holds that
$(B w)(v_1)+\beta \langle \gamma_0 u,v_2\rangle=0$, and so
\be \label{e4}
\sup_{(v_1,v_2) \in Y^\delta \times H} \hspace*{-.7em} \frac{(B E_X^\delta w)(E_Y^\delta v_1)+\beta \langle \gamma_0 E_X^\delta w,v_2\rangle}{\sqrt{((K_Y^\delta)^{-1} v_1)(v_1)+\beta\|v_2\|^2}}=
 \hspace*{-.7em} \sup_{(v_1,v_2) \in Z^\delta}  \hspace*{-.7em}\frac{(B E_X^\delta w)(E_Y^\delta v_1)+\beta \langle \gamma_0E_X^\delta w,v_2\rangle}{\sqrt{((K_Y^\delta)^{-1} v_1)(v_1)+\beta\|v_2\|^2}}.
\ee


\begin{theorem} \label{thm:mainMRM} Under conditions \eqref{cond1}, \eqref{cond2}, and \eqref{cond3}, the solution $u^\delta \in X^\delta$ of \eqref{MRM} exists uniquely, and satisfies
$$
\|u-u^\delta\|_X \leq
\sqrt{{\textstyle \frac{\max(R_\Delta,1) \Big(1+{\textstyle \frac12}\big(\alpha^2+\alpha\sqrt{\alpha^2+4}\,\big)\Big)}
{\min(r_\Delta,1) {\textstyle \frac12}\Big((\gamma_\Delta^{\partial_t})^2+\alpha^2+1-\sqrt{((\gamma_\Delta^{\partial_t})^2+\alpha^2+1)^2-4 (\gamma_\Delta^{\partial_t})^2}\Big)}}}
 \inf_{w \in X^\delta} \|u-w\|_X.
$$
\end{theorem}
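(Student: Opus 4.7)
The plan is to derive the quasi-optimality from three ingredients: a continuity bound $\nrm v\nrm_\delta \leq C_1\|v\|_X$ ($v\in X$) for the MR semi-norm
\[
\nrm v\nrm_\delta^2 := \|{E_Y^\delta}' B v\|_{K_Y^\delta}^2 + \beta\|\gamma_0 v\|^2,
\]
a stability bound $c_2\|w\|_X \leq \nrm w\nrm_\delta$ on $X^\delta$, and Kato's projection-norm identity applied to the idempotent $P^\delta\colon u\mapsto u^\delta$ from $X$ onto $X^\delta$ (well-defined once stability is established). Continuity follows by combining the upper bound in~\eqref{cond3} (giving $\|\cdot\|_{K_Y^\delta}^2 \leq r_\Delta^{-1}\|\cdot\|_{{Y^\delta}'}^2$) with the trivial $\|{E_Y^\delta}' Bv\|_{{Y^\delta}'} \leq \|Bv\|_{Y'}$ and Proposition~\ref{prop:continuous}; this yields $C_1^2 = \max(r_\Delta^{-1},1)\bigl(1+\tfrac12(\alpha^2+\alpha\sqrt{\alpha^2+4})\bigr)$.

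The stability bound is the heart of the proof. I would first derive a \emph{discrete analog of~\eqref{BC}}: for $w\in X^\delta$,
\[
\|{E_Y^\delta}' Bw\|_{{Y^\delta}'}^2 + \beta\|\gamma_0 w\|^2 = \|{E_Y^\delta}' Cw\|_{{Y^\delta}'}^2 + \|w\|_Y^2 + \|\gamma_T w\|^2 + (\beta-1)\|\gamma_0 w\|^2.
\]
Because $X^\delta\subseteq Y^\delta$ by~\eqref{cond1}, the Riesz representer in $Y^\delta$ of ${E_Y^\delta}' A_s w$ is precisely $w$; writing $B = C + A_s$ and expanding the residual norm by Pythagoras then reduces the cross-term to $(\partial_t w)(w) + (A_a w)(w) = \tfrac12(\|\gamma_T w\|^2 - \|\gamma_0 w\|^2)$ (antisymmetry of $A_a$ and integration by parts in time). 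Next I would apply the Young-type bound
\[
\|{E_Y^\delta}' Cw\|_{{Y^\delta}'}^2 \geq (1-\eta^2)\|{E_Y^\delta}' \partial_t w\|_{{Y^\delta}'}^2 + (1-\eta^{-2})\|{E_Y^\delta}' A_a w\|_{{Y^\delta}'}^2
\]
for $\eta\in(0,1]$ to $C = \partial_t + A_a$, insert the LBB bound $\|{E_Y^\delta}' \partial_t w\|_{{Y^\delta}'}^2 \geq (\gamma_\Delta^{\partial_t})^2\|\partial_t w\|_{Y'}^2$ from~\eqref{cond2} and the trivial $\|{E_Y^\delta}' A_a w\|_{{Y^\delta}'} \leq \alpha\|w\|_Y$, and choose $\eta$ so that the resulting coefficients of $\|\partial_t w\|_{Y'}^2$ and (once the $\|w\|_Y^2$ from the discrete identity is folded in) of $\|w\|_Y^2$ coincide. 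The optimal $\eta^2$ solves the quadratic $(\gamma_\Delta^{\partial_t})^2\eta^4 + (1+\alpha^2 - (\gamma_\Delta^{\partial_t})^2)\eta^2 - \alpha^2 = 0$, and the common value of those coefficients turns out to be exactly the smaller root
\[
x_- = \tfrac12\bigl((\gamma_\Delta^{\partial_t})^2 + \alpha^2 + 1 - \sqrt{((\gamma_\Delta^{\partial_t})^2 + \alpha^2 + 1)^2 - 4(\gamma_\Delta^{\partial_t})^2}\bigr)
\]
of $x^2 - ((\gamma_\Delta^{\partial_t})^2 + \alpha^2 + 1)x + (\gamma_\Delta^{\partial_t})^2 = 0$ (noting that $x_-\leq 1$ since the polynomial is non-positive at $x=1$). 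The lower bound in~\eqref{cond3} then upgrades this to $c_2^2 = \min(R_\Delta^{-1},1)\cdot x_- = x_-/\max(R_\Delta,1)$.

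The closure is by Galerkin orthogonality: for the positive semi-definite form $a_{\mathrm{MR}}(v,w) := \langle{E_Y^\delta}' Bv, {E_Y^\delta}' Bw\rangle_{K_Y^\delta} + \beta\langle\gamma_0 v,\gamma_0 w\rangle$, one has $a_{\mathrm{MR}}(u - P^\delta u, w) = 0$ for every $w\in X^\delta$, so by Cauchy--Schwarz $\nrm P^\delta u\nrm_\delta \leq \nrm u\nrm_\delta \leq C_1\|u\|_X$, and stability gives $\|P^\delta u\|_X \leq (C_1/c_2)\|u\|_X$. Kato's identity $\|I - P^\delta\|_{X\to X} = \|P^\delta\|_{X\to X}$ (valid for any non-trivial bounded projection) then yields, for any $w\in X^\delta$,
\[
\|u - u^\delta\|_X = \|(I - P^\delta)(u - w)\|_X \leq (C_1/c_2)\|u - w\|_X,
\]
and infimizing over $X^\delta$ gives the theorem after rewriting $(C_1/c_2)^2$ in the stated form. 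The main obstacle is the Young's-inequality optimization in the stability step: obtaining the sharp constant $x_-$---rather than the crude $(1+\alpha)^2$-type factor that a plain triangle inequality on $\|{E_Y^\delta}' Bw\|_{{Y^\delta}'} \geq \gamma_\Delta^{\partial_t}\|\partial_t w\|_{Y'} - \ldots$ would produce---requires precisely balancing the two coefficients to arrive at the quadratic defining $x_-$.
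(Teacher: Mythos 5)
Your proposal is correct and follows essentially the same route as the paper: Kato's identity $\|P^\delta\|=\|\identity-P^\delta\|$, continuity of the discrete residual quantity via Proposition~\ref{prop:continuous}, the discrete analogue of \eqref{BC} (the paper's \eqref{equality}), and the Young-inequality balancing that produces the root $x_-$ of $x^2-((\gamma_\Delta^{\partial_t})^2+\alpha^2+1)x+(\gamma_\Delta^{\partial_t})^2=0$, with $r_\Delta,R_\Delta$ entering exactly as in \eqref{e6}--\eqref{e8}. The only (immaterial) difference is that you obtain $\nrm P^\delta u\nrm_\delta\le\nrm u\nrm_\delta$ directly from Galerkin orthogonality of the normal equations, whereas the paper routes the same fact through the Petrov--Galerkin formulation \eqref{e2} with the approximate optimal test space $Z^\delta$.
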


\new{Before we give its proof, we make a few comments on this error bound.
First, it shows that for $\gamma_\Delta^{\partial_t}=r_\Delta=R_\Delta=1$ and $\alpha=0$, $u^\delta$ is the best approximation to $u$ from $X^\delta$.
Secondly, for $\alpha=0$ (and $\beta=1$), the bound equals the one found in \cite[Thm.~3.7 \& Rem.~3.8]{249.99}.
Thirdly, using Mathematica$^{\text{\textregistered}}$ \cite{310.7} we find  that\footnote{\new{\texttt{Reduce[\{Sqrt[3]/2 <= Sqrt[(1 + 1/2*(a\textasciicircum2 + a*Sqrt[a\textasciicircum2 + 4]))/(1/2*(g\textasciicircum2 + a\textasciicircum2 + 1 - Sqrt[(g\textasciicircum2 + a\textasciicircum2 + 1)\textasciicircum2 - 4g\textasciicircum2]))] / ((1 + 1/2*(a\textasciicircum2 + a*Sqrt[a\textasciicircum2 + 4]))/g) <= 1\}, \{a, g\}]} returns \texttt{a >= 0 \&\& 0 < g <= 1}.}}
$$
{\textstyle \sqrt{ \frac{ \Big(1+{\textstyle \frac12}\big(\alpha^2+\alpha\sqrt{\alpha^2+4}\,\big)\Big)}
{ {\textstyle \frac12}\Big((\gamma_\Delta^{\partial_t})^2+\alpha^2+1-\sqrt{((\gamma_\Delta^{\partial_t})^2+\alpha^2+1)^2-4 (\gamma_\Delta^{\partial_t})^2}\Big)}}}
\, \Big/\,
 {\textstyle \frac{1+{\textstyle \frac12}\big(\alpha^2+\alpha\sqrt{\alpha^2+4}\,\big)}{\gamma_\Delta^{\partial_t}}} \in \big[\tfrac12 \sqrt{3},1\big]
$$
for $\alpha \ge 0$, $\gamma_\Delta^{\partial_t}\in (0,1]$,
clarifying the behaviour of the bound  in terms of $\alpha$ and $\gamma_\Delta^{\partial_t}$.}

\begin{proof} Let $u$ be the solution of \eqref{x12}, i.e., $g=Bu$ and $u_0=\gamma_0 u$.
The mapping $P^\delta \in \cL(X,X)$ from $u$ to the solution
  $u^\delta \in X^\delta$ of \eqref{ls} or, equivalently, \eqref{MRM} or \eqref{e2}, is a projector onto $X^\delta$ that, by our assumption $X^\delta \not\in \{0,X\}$, is unequal to $0$ or $\identity$.
Consequently $\|P^\delta\|_{\cL(X,X)}=\|\identity -P^\delta\|_{\cL(X,X)}$ (\cite{169.5,315.7}), and
\be \label{e5}
\begin{split}
\|u-u^\delta\|_X & =\|(\identity-P^\delta)u\|_X=\inf_{w \in X^\delta}\|(\identity-P^\delta)(u-w)\|_X\\
& \leq \|P^\delta\|_{\cL(X,X)}\inf_{w \in X^\delta}\|u-w\|_X.
\end{split}
\ee

  To bound $\|P^\delta\|_{\cL(X,X)}= \sup_{0 \not= w \in X} \tfrac{\|P^\delta w\|_X}{\|w\|_X}$, given $w \in X$, let $E^\delta_X  w^\delta:=P^\delta w$.
Using \eqref{cond3}, \eqref{e4}, \eqref{e2}, and Proposition~\ref{prop:continuous} we estimate
\be \label{e6}
\begin{split}
& \sup_{(v_1,v_2) \in Y^\delta \times H} \frac{\big((B E_X^\delta w^\delta)(E_Y^\delta v_1)+\beta \langle \gamma_0 E_X^\delta w^\delta,v_2\rangle\big)^2}
{\|E_Y^\delta v_1\|_Y^2+\beta \|v_2\|^2}
\\ &\leq
\tfrac{1}{\min(r_\Delta,1)}
\sup_{(v_1,v_2) \in Y^\delta \times H} \frac{\big((B E_X^\delta w^\delta)(E_Y^\delta v_1)+\beta \langle \gamma_0 E_X^\delta w^\delta,v_2\rangle\big)^2}
{((K_Y^\delta)^{-1}v_1)(v_1)+\beta \|v_2\|^2}
\\ &= \tfrac{1}{\min(r_\Delta,1)}
\sup_{(v_1,v_2) \in Z^\delta} \frac{\big((B E_X^\delta w^\delta)(E_Y^\delta v_1)+\beta \langle \gamma_0 E_X^\delta w^\delta,v_2\rangle\big)^2}
{((K_Y^\delta)^{-1}v_1)(v_1)+\beta \|v_2\|^2}
\\ &= \tfrac{1}{\min(r_\Delta,1)}
\sup_{(v_1,v_2) \in Z^\delta} \frac{\big((B w)(E_Y^\delta v_1)+\beta \langle \gamma_0 w,v_2\rangle\big)^2}
{((K_Y^\delta)^{-1}v_1)(v_1)+\beta \|v_2\|^2}
\\ &\leq
 \tfrac{\max(R_\Delta,1)}{\min(r_\Delta,1)}
\sup_{(v_1,v_2) \in Y \times H} \frac{\big((B w)(v_1)+\beta \langle \gamma_0 w,v_2\rangle\big)^2}
{\|v_1\|_Y^2+\beta \|v_2\|^2} \\
&= \tfrac{\max(R_\Delta,1)}{\min(r_\Delta,1)} \nrm w\nrm_X^2 \leq  \tfrac{\max(R_\Delta,1)}{\min(r_\Delta,1)} \Big(1+{\textstyle \frac12}\big(\alpha^2+\alpha\sqrt{\alpha^2+4}\,\big)\Big) \|w\|_X^2.
\end{split}
\ee

On the other hand,
\be \label{e7}
\begin{split}
& \sup_{(v_1,v_2) \in Y^\delta \times H} \frac{\big((B E_X^\delta w^\delta)(E_Y^\delta v_1)+\beta \langle \gamma_0 E_X^\delta w^\delta,v_2\rangle\big)^2}
  {\|E_Y^\delta v_1\|_Y^2+\beta \|v_2\|^2}\\
&= \sup_{(v_1,v_2) \in Y^\delta \times H}
\frac{\big((A_s E_Y^\delta ({E_Y^\delta}' A_sE_Y^\delta)^{-1} {E_Y^\delta}' B E_X^\delta w^\delta)(E_Y^\delta v_1)+\beta \langle \gamma_0 E_X^\delta w^\delta,v_2\rangle \big)^2}
  {\|E_Y^\delta v_1\|_Y^2+\beta \|v_2\|^2}\\
& =\sup_{(v_1,v_2) \in Y^\delta \times H}
 \frac{\big(\langle E_Y^\delta ({E_Y^\delta}' A_sE_Y^\delta)^{-1} {E_Y^\delta}' B E_X^\delta w^\delta,E_Y^\delta v_1\rangle_Y+\beta \langle \gamma_0 E_X^\delta w^\delta,v_2\rangle \big)^2}
{\|E_Y^\delta v_1\|_Y^2+\beta \|v_2\|^2}\\
&= \|E_Y^\delta ({E_Y^\delta}' A_sE_Y^\delta)^{-1} {E_Y^\delta}' B E_X^\delta w^\delta\|_Y^2+\beta\|\gamma_0 E_X^\delta w^\delta\|^2\\
& = (A_s E_Y^\delta ({E_Y^\delta}' A_sE_Y^\delta)^{-1} {E_Y^\delta}' B E_X^\delta w^\delta)(E_Y^\delta ({E_Y^\delta}' A_sE_Y^\delta)^{-1} {E_Y^\delta}' B E_X^\delta w^\delta)\\
&\hspace*{20em}+\beta  ({E_X^\delta}' \gamma_0' \gamma_0 E_X^\delta w^\delta)(w^\delta)\\
&=
\big(({E_{\new{X}}^\delta}' B' E_Y^\delta ({E_Y^\delta}' A_sE_Y^\delta)^{-1} {E_Y^\delta}' B E_X^\delta +\beta  {E_X^\delta}' \gamma_0' \gamma_0 E_X^\delta )w^\delta\big)(w^\delta).
\end{split}
\ee

Using \eqref{cond1}, we write $E_X^\delta=E_Y^\delta F^\delta$ with $F^\delta$ denoting the trivial embedding $X^\delta \rightarrow Y^\delta$.
Using $B =C +A_s$ and $C+C'+\gamma_0'\gamma_0=\gamma_T'\gamma_T$, similar to \eqref{BC} we infer that
\be \label{equality}
\begin{split}
&{E_X^\delta}' B'E_Y^\delta ({E_Y^\delta}' A_s E_Y^\delta)^{-1} {E_Y^\delta}' B E_X^\delta
+{E_X^\delta}' \beta \gamma_0' \gamma_0 E_X^\delta\\
&=
{F^\delta}'\Big({E_Y^\delta}' B' E_Y^\delta ({E_Y^\delta}' A_s E_Y^\delta)^{-1} {E_Y^\delta}' B E_Y^\delta
+{E_Y^\delta}'\beta  \gamma_0' \gamma_0 E_Y^\delta\Big)F^\delta\\
&={F^\delta}'\Big({E_Y^\delta}' C' E_Y^\delta ({E_Y^\delta}' A_s E_Y^\delta)^{-1} {E_Y^\delta}' C E_Y^\delta+{E_Y^\delta}' A_s E_Y^\delta+{E_Y^\delta}' (\gamma_T' \gamma_T +(\beta-1)\gamma_0' \gamma_0)E_Y^\delta\Big)F^\delta\\
&={E_X^\delta}' C' E_Y^\delta ({E_Y^\delta}' A_s E_Y^\delta)^{-1} {E_Y^\delta}' C E_X^\delta+{E_X^\delta}' A_s E_X^\delta
+{E_X^\delta}'  (\gamma_T' \gamma_T +(\beta-1)\gamma_0' \gamma_0) E_X^\delta.
\end{split}
\ee
We conclude that for any  $\eta \in (0,1]$,
\be \label{e8}
\begin{split}
\big((&{E_X^\delta}' B'E_Y^\delta ({E_Y^\delta}' A_s E_Y^\delta)^{-1} {E_Y^\delta}' B E_X^\delta
+{E_X^\delta}' \beta \gamma_0' \gamma_0 E_X^\delta)w^\delta\big)(w^\delta)\\ 
&= \|{E_Y^\delta}' C E_X^\delta w^\delta\|_{{Y^\delta}'}^2+\|E_X^\delta w^\delta\|_Y^2+\|\gamma_T E_X^\delta w^\delta\|^2+(\beta-1)\|\gamma_0 E_X^\delta w^\delta\|^2\\
& \geq (\|{E_Y^\delta}' \partial_t E_X^\delta w^\delta\|_{{Y^\delta}'}-\alpha\|E_X^\delta w^\delta\|_Y)^2+\|E_X^\delta w^\delta\|_Y^2+\|\gamma_T E_X^\delta w^\delta\|^2\\
& \hspace*{20em}
+(\beta-1)\|\gamma_0 E_X^\delta w^\delta\|^2\\
& \geq (1-\eta^2)\|{E_Y^\delta}' \partial_t E_X^\delta w^\delta\|_{{Y^\delta}'}^2+\big((1-\eta^{-2})\alpha^2+1\big)\|E_X^\delta w^\delta\|^2_Y+\|\gamma_T E_X^\delta w^\delta\|^2\\  &\hspace*{20em}+(\beta-1)\|\gamma_0 E_X^\delta w^\delta\|^2
\\
& \stackrel{\hspace*{-0.5em}\eqref{cond2}\hspace*{-1em}}{\geq} (1-\eta^2)(\gamma_\Delta^{\partial_t})^2  \|\partial_t E_X^\delta w^\delta\|_{Y'}^2+\big((1-\eta^{-2})\alpha^2+1\big)\|E_X^\delta w^\delta\|^2_Y+\|\gamma_T E_X^\delta w^\delta\|^2 \hspace*{-1em}\\
&\hspace*{20em}+(\beta-1)\|\gamma_0 E_X^\delta w^\delta\|^2\\
&\geq \min\Big((1-\eta^2)(\gamma_\Delta^{\partial_t})^2,\big((1-\eta^{-2})\alpha^2+1\big)\Big) \|E_X^\delta w^\delta\|_X^2,
\end{split}
\ee
where we applied Young's inequality. Solving $(1-\eta^2)(\gamma_\Delta^{\partial_t})^2=\big((1-\eta^{-2})\alpha^2+1\big)$ for $\eta$
yields
\be \label{e9}
(1-\eta^2)(\gamma_\Delta^{\partial_t})^2={\textstyle \frac12}\Big((\gamma_\Delta^{\partial_t})^2+\alpha^2+1-\sqrt{((\gamma_\Delta^{\partial_t})^2+\alpha^2+1)^2-4 (\gamma_\Delta^{\partial_t})^2}\Big)>0.
\ee
Recalling \eqref{e5} and $\|P^\delta\|_{\cL(X,X)}=\sup_{0 \neq w \in X} \frac{\|w^\delta\|_X}{\|w\|_X}$, the proof is completed by combining \eqref{e6}, \eqref{e7}, and \eqref{e8}.
\end{proof}

\section{Br\'{e}zis--Ekeland--Nayroles (BEN) formulation} \label{S4}
The minimizer $u \in X$ of $\Big\|\left[\begin{array}{@{}c@{}} B \\ \sqrt{\beta}\,\gamma_0\end{array} \right] w-\left[\begin{array}{@{}c@{}} g \\ \sqrt{\beta}\, u_0\end{array} \right]\Big\|^2_{Y' \times H}$, that is equal to the unique solution of \eqref{x12}, is the unique solution of
\be \label{100}
(B' A_s^{-1} B+\beta \gamma_0' \gamma_0)u= B' A_s^{-1}g+\beta \gamma_0' u_0.
\ee
As we have seen in \eqref{BC}, this system is equivalent to
\be \label{101}
(C' A_s^{-1} C + A_s +\gamma_T' \gamma_T+(\beta-1)\gamma_0' \gamma_0)u=(\identity+ C' A_s^{-1}) g + \beta \gamma_0' u_0,
\ee
showing that $u$ is the second component of the pair $(\lambda,u) \in Y \times X$ that solves
 \be \label{saddle}
 \left[\begin{array}{@{}cc@{}} A_s & C \\ C'  & -(A_s+\gamma_T' \gamma_T+(\beta-1)\gamma_0' \gamma_0) \end{array}\right]
 \left[\begin{array}{@{}c@{}} \lambda \\ u \end{array}\right]=
 \left[\begin{array}{@{}c@{}} g \\ -(g+\beta \gamma_0' u_0) \end{array}\right].
\ee
Notice that $\lambda=u$.

The formulation \eqref{101} of the parabolic equation can alternatively be derived from the application of the Br\'{e}zis--Ekeland--Nayroles variational principle (\cite{35.81,233.5}, cf.~also \cite[\S3.2.4]{10.6}), which generalizes beyond  the linear, Hilbert space setting.

Given $\delta \in \Delta$, we consider the Galerkin discretization of \eqref{saddle}, i.e.,
\be \label{BENsaddle}
\left[\begin{array}{@{}cc@{}} {E^\delta_Y}'A_s E^\delta_Y & {E^\delta_Y}'C E^\delta_X \\ ({E^\delta_Y}'C E^\delta_X)' & -{E^\delta_X}'(A_s+\gamma_T' \gamma_T+(\beta-1)\gamma_0' \gamma_0)E^\delta_X \end{array}\right]
\left[\begin{array}{@{}c@{}} \lambda^\delta \\ \bar{u}^\delta \end{array}\right]=
\left[\begin{array}{@{}c@{}}  {E^\delta_Y}' g\\  -{E^\delta_X}' (g+\beta \gamma_0' u_0)\end{array}\right]
\ee
or, equivalently
\be \label{BEN}
\begin{split}
{E_X^\delta}' \big(C' E_Y^\delta ({E_Y^\delta}' A_s E_Y^\delta)^{-1} {E_Y^\delta}' C
&+A_s+\gamma_T' \gamma_T+(\beta-1)\gamma_0' \gamma_0\big) E_X^\delta \bar{u}^\delta\\
&=
{E_X^\delta}' \big(C' E_Y^\delta ({E_Y^\delta}' A_s E_Y^\delta)^{-1} {E_Y^\delta}'g+g+\beta \gamma_0' u_0\big).
\end{split}
\ee

\begin{remark} \label{rem10} Assuming $X^\delta \subseteq Y^\delta$ (\eqref{cond1}) and $K_Y^\delta=({E_Y^\delta}' A_s E_Y^\delta)^{-1}$, it holds that  $\bar{u}^\delta=u^\delta$, i.e., the solutions of BEN  and MR \emph{are equal}. Indeed, \eqref{equality} shows that in this case the operator at the left-hand side of \eqref{BEN} equals the operator in \eqref{MRM}, and from
${E_X^\delta}' A_s E_Y^\delta ({E_Y^\delta}' A_s E_Y^\delta)^{-1} {E_Y^\delta}'={E_X^\delta}'$
when $X^\delta \subseteq Y^\delta$ one deduces that also the right-hand sides agree.

In contrast to MR, with BEN, however, it is \emph{not} possible to replace $({E_Y^\delta}' A_s E_Y^\delta)^{-1}$ by a general preconditioner as in \eqref{m9}-\eqref{MRM} and still obtain a quasi-best approximation to $(\lambda,u)$ from $Y^\delta \times X^\delta$. This can be understood by noticing that
replacing $A_s^{-1}$ in \eqref{101} by another operator changes the solution, whereas this is not the case in \eqref{100}.
So for the iterative solution of BEN one has to operate on the saddle point system \eqref{BENsaddle} instead of on a symmetric positive definite system as with MR, see \eqref{MRM}.

On the other hand, with BEN it is not needed that $X^\delta \subseteq Y^\delta$, as we will see below.
\end{remark}

The applicability of BEN for the case that $A_a \neq 0$ was already demonstrated in \cite{249.99}. The following result
gives a quantitatively better error bound.

\begin{theorem} \label{thm:mainBEN} Under the sole condition \eqref{cond2}, the solution $\bar{u}^\delta \in X^\delta$ of \eqref{BEN} exists uniquely, and satisfies
$$
\|u-\bar{u}^\delta\|_X \leq {\textstyle \frac{\big(1+{\textstyle \frac12}\big(\alpha^2+\alpha\sqrt{\alpha^2+4}\big)\big) {\displaystyle \inf_{w \in X^\delta} \|u-w\|_X}+\sqrt{1+\alpha^2}\,{\displaystyle \inf_{v \in Y^\delta} \|u-v\|_Y}}{{\textstyle \frac12}\big((\gamma_\Delta^{\partial_t})^2+\alpha^2+1-\sqrt{((\gamma_\Delta^{\partial_t})^2+\alpha^2+1)^2-4 (\gamma_\Delta^{\partial_t})^2}\big)}}.
$$
\end{theorem}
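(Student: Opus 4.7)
The plan is to eliminate the auxiliary variable $\lambda^\delta$ from \eqref{BENsaddle} and analyze the resulting symmetric positive-definite reduced system on $X^\delta$, which is precisely \eqref{BEN}. Abbreviating $S^\delta := E^\delta_Y({E^\delta_Y}' A_s E^\delta_Y)^{-1}{E^\delta_Y}'$ and introducing the bilinear form
\[
m^\delta(v,w):=(Cv)(S^\delta Cw)+\langle A_s v,w\rangle+\langle \gamma_T v,\gamma_T w\rangle+(\beta-1)\langle \gamma_0 v,\gamma_0 w\rangle
\]
on $X\times X$, equation \eqref{BEN} reads $m^\delta(\bar u^\delta,w^\delta)=F^\delta(w^\delta)$ for all $w^\delta\in X^\delta$, with $F^\delta$ a data-dependent linear functional. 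I would then verify three ingredients, mirroring the structure of the proof of Theorem~\ref{thm:mainMRM}.

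First, \emph{coercivity} of $m^\delta$ on $X^\delta$: because $(Cv)(S^\delta Cv)=\|{E^\delta_Y}' Cv\|_{{Y^\delta}'}^2$, the chain \eqref{e8}--\eqref{e9} applies verbatim for $v\in E^\delta_X X^\delta$, using only \eqref{cond2} (and, crucially, \emph{not} $X^\delta\subseteq Y^\delta$), yielding $m^\delta(v,v)\ge\kappa\,\|v\|_X^2$ with $\kappa$ the denominator in the claim. Second, \emph{boundedness}: since $S^\delta f$ equals the $A_s$-orthogonal projection of $A_s^{-1}f$ onto $Y^\delta$, one has $(Cv)(S^\delta Cv)\le(Cv)(A_s^{-1}Cv)=\|Cv\|_{Y'}^2$, so $m^\delta(v,v)$ is dominated by the continuous analogue $n(v,v)$ obtained by replacing $S^\delta$ with $A_s^{-1}$; by \eqref{BC} this equals $\|Bv\|_{Y'}^2+\beta\|\gamma_0 v\|^2=\nrm v\nrm_X^2$, and Proposition~\ref{prop:continuous} then gives $m^\delta(v,v)\le M\|v\|_X^2$ with $M:=1+\tfrac12(\alpha^2+\alpha\sqrt{\alpha^2+4})$. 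Third, \emph{consistency}: substituting $g=Bu=Cu+A_s u$ and $u_0=\gamma_0 u$ into $F^\delta(w)$, recognizing $S^\delta A_s u$ as the $A_s$-orthogonal projection $Q^\delta u\in Y^\delta$, and invoking the identity $\langle Cv,w\rangle+\langle Cw,v\rangle=\langle\gamma_T v,\gamma_T w\rangle-\langle\gamma_0 v,\gamma_0 w\rangle$ to collapse the boundary contributions, one arrives at
\[
m^\delta(u-\bar u^\delta,w)=\langle Cw,u-Q^\delta u\rangle\quad(w\in X^\delta).
\]
The right-hand side is then bounded by $\sqrt{1+\alpha^2}\,\|w\|_X\,\|u-Q^\delta u\|_Y$, using the Young-inequality optimization $\|Cw\|_{Y'}\le\sqrt{1+\alpha^2}\,\|w\|_X$ from the proof of Proposition~\ref{prop:continuous} and the fact that $\|u-Q^\delta u\|_Y=\inf_{v\in Y^\delta}\|u-v\|_Y$.

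To close, I would run a Strang--Fix type argument: for arbitrary $w^\delta\in X^\delta$, set $z:=\bar u^\delta-w^\delta\in X^\delta$, write
\[
\kappa\|z\|_X^2\le m^\delta(z,z)=m^\delta(u-w^\delta,z)+\langle Cz,u-Q^\delta u\rangle,
\]
bound the two summands via boundedness of $m^\delta$ and the consistency estimate respectively, divide by $\|z\|_X$, and invoke $\|u-\bar u^\delta\|_X\le\|u-w^\delta\|_X+\|z\|_X$ before taking the infimum over $w^\delta\in X^\delta$. The main obstacle is pinning down the precise numerator constants stated in the theorem---in particular the clean factor $M$ (and no additional additive term) in front of $\inf_{w\in X^\delta}\|u-w\|_X$, as opposed to what naive bookkeeping gives; this will require a careful Cauchy--Schwarz in the $m^\delta$-boundedness step with optimally chosen Young weights, so that the $\|u-w^\delta\|_X$ and $\|u-Q^\delta u\|_Y$ contributions appear in the final estimate without being inflated via an extraneous triangle inequality.
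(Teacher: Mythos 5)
Your three ingredients are exactly the paper's: the bilinear form $m^\delta$ is $(G(\delta)\cdot)(\cdot)$ with $G(\delta):=C'E_Y^\delta({E_Y^\delta}'A_sE_Y^\delta)^{-1}{E_Y^\delta}'C+A_s+\gamma_T'\gamma_T+(\beta-1)\gamma_0'\gamma_0$, and the paper establishes precisely your coercivity bound $\kappa$ on $X^\delta$ (via the second line of \eqref{e8}, using only \eqref{cond2}), your boundedness bound $M=1+\tfrac12(\alpha^2+\alpha\sqrt{\alpha^2+4})$ (via \eqref{BC}, Proposition~\ref{prop:continuous}, and symmetric positive semi-definiteness), and your consistency identity $m^\delta(u-\bar u^\delta,w)=(Cw)\big((\identity-Q^\delta)u\big)$ with $Q^\delta$ the $A_s$-orthogonal projector onto $Y^\delta$, together with $\|C\|_{\cL(X,Y')}\le\sqrt{1+\alpha^2}$ and $\|(\identity-Q^\delta)u\|_Y=\inf_{v\in Y^\delta}\|u-v\|_Y$. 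All of these steps are correct.

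The gap is in the final assembly, and you have correctly located it but misdiagnosed the cure. The Strang route through $z:=\bar u^\delta-w^\delta$ necessarily produces $\big(1+\tfrac{M}{\kappa}\big)\inf_{w}\|u-w\|_X$ rather than $\tfrac{M}{\kappa}\inf_{w}\|u-w\|_X$: the additive $1$ enters at the triangle inequality $\|u-\bar u^\delta\|_X\le\|u-w^\delta\|_X+\|z\|_X$, \emph{before} any bilinear-form estimate is applied, so no choice of Young weights or sharper Cauchy--Schwarz downstream can remove it. The paper's device is different: writing $P^\delta:=E_X^\delta({E_X^\delta}'G(\delta)E_X^\delta)^{-1}{E_X^\delta}'G(\delta)$ for the $G(\delta)$-Galerkin projector onto $X^\delta$, one has for arbitrary $w\in X^\delta$, $v\in Y^\delta$ the exact decomposition
\[
u-\bar u^\delta=(\identity-P^\delta)(u-w)+E_X^\delta({E_X^\delta}'G(\delta)E_X^\delta)^{-1}{E_X^\delta}'C'(\identity-Q^\delta)(u-v),
\]
and the first term is estimated using the Kato/Xu--Zikatanov identity $\|\identity-P^\delta\|_{\cL(X,X)}=\|P^\delta\|_{\cL(X,X)}\le\kappa^{-1}M$ for a nontrivial projector --- the same identity invoked in \eqref{e5} for the MR proof. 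With that replacement for your closing argument (and your consistency identity feeding the second term), the stated constants follow; as written, your proposal proves only the weaker bound with $1+\tfrac{M}{\kappa}$ in the numerator of the first term.
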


\begin{proof}
With $g=Bu$ and $u_0=\gamma_0 u$, using $B=C+A_s$ and $\gamma_0' \gamma_0=\gamma_T' \gamma_T-(C'+C)$, the right-hand side of \eqref{BEN} reads as
\begin{align*}
&{E_X^\delta}' \big(C' E_Y^\delta ({E_Y^\delta}' A_s E_Y^\delta)^{-1} {E_Y^\delta}' (C+A_s)+A_s+\gamma_T' \gamma_T+(\beta-1)\gamma_0' \gamma_0-C'\big)u=
\\
&{E_X^\delta}' \big( C' E_Y^\delta ({E_Y^\delta}' A_s E_Y^\delta)^{-1} {E_Y^\delta}' C +A_s+\gamma_T' \gamma_T+(\beta-1)\gamma_0' \gamma_0
\\
&\hspace*{15em}+C' \big[E_Y^\delta ({E_Y^\delta}' A_s E_Y^\delta)^{-1} {E_Y^\delta}' A_s -\identity\big]\big)u.
\end{align*}
So with $G(\delta):=C' E_Y^\delta ({E_Y^\delta}' A_s E_Y^\delta)^{-1} {E_Y^\delta}' C
+A_s+\gamma_T' \gamma_T+(\beta-1)\gamma_0' \gamma_0$, it holds that
$$
u \mapsto E_X^\delta\bar{u}^\delta=E_X^\delta({E_X^\delta}' G(\delta) E_X^\delta)^{-1}{E_X^\delta}' \big( G(\delta)+C' \big[E_Y^\delta ({E_Y^\delta}' A_s E_Y^\delta)^{-1} {E_Y^\delta}' A_s -\identity\big]\big)u,
$$
where we already used that ${E_X^\delta}' G(\delta) E_X^\delta$ is invertible, which will be verified below.
Since $E_X^\delta({E_X^\delta}' G(\delta) E_X^\delta)^{-1}{E_X^\delta}'  G(\delta) \in \cL(X,X)$ and $E_Y^\delta ({E_Y^\delta}' A_s E_Y^\delta)^{-1} {E_Y^\delta}' A_s \in \cL(Y,Y)$ are projectors onto $X^\delta$ and $Y^\delta$, respectively, the latter being orthogonal, for any $v \in Y^\delta$ and $w \in X^\delta$ it holds that
\begin{align*}
u-\bar{u}^\delta=&(\identity-E_X^\delta({E_X^\delta}' G(\delta) E_X^\delta)^{-1}{E_X^\delta}'  G(\delta))(u-E_X^\delta w)\\
&+E_X^\delta({E_X^\delta}' G(\delta) E_X^\delta)^{-1} {E_X^\delta}' C' \big[\identity-E_Y^\delta ({E_Y^\delta}' A_s E_Y^\delta)^{-1} {E_Y^\delta}' A_s\big](u-E_Y^\delta v)
\end{align*}
and so, also using $Y^\delta \not\in\{0,Y\}$,
\begin{align*}
\|u-\bar{u}^\delta\|_X \leq
\|({E_X^\delta}' G(\delta) E_X^\delta)^{-1}\|_{\cL({X^\delta}',X^\delta)}\Big\{&\|G(\delta)\|_{\cL(X,X')} \inf_{w \in X^\delta}\|u-w\|_X\\&+
\|C\|_{\cL(X,Y')} \inf_{v \in Y^\delta}\|u-v\|_Y
\Big\}.
\end{align*}

For $w \in X$, we have
\begin{align*}
(G(\delta)w)(w) & = \|{E_Y^\delta}' Cw\|^2_{{Y^\delta}'}+\|w\|_Y^2+\|\gamma_T w\|^2+(\beta-1)\|\gamma_0 w\|^2
\\ &\leq \|Cw\|^2_{Y'}+\|w\|_Y^2+\|\gamma_T w\|^2 +(\beta-1)\|\gamma_0 w\|^2
\\
& =((C' A_s^{-1} C + A_s+\gamma_T' \gamma_T+(\beta-1)\gamma_0' \gamma_0)w)(w)
=\|Bw\|^2_{Y'}+\beta \|\gamma_0 w\|^2 \\
&\leq \Big(1+{\textstyle \frac12}\Big(\alpha^2+\alpha\sqrt{\alpha^2+4}\Big)\Big) \|w\|_X^2
\end{align*}
  by Proposition~\ref{prop:continuous}. Since $(G(\delta)\cdot)(\cdot)$ is symmetric semi-positive-definite, we conclude that $\|G(\delta)\|_{\cL(X,X')} \leq 1+{\textstyle \frac12}\Big(\alpha^2+\alpha\sqrt{\alpha^2+4}\Big)$.

For $w \in X^\delta$, one deduces
\begin{align*}
(G(\delta)E_X^\delta w)(E_X^\delta w)&= \|{E_Y^\delta}' CE_X^\delta w\|^2_{{Y^\delta}'}+\|E_X^\delta w\|_Y^2+\|\gamma_T E_X^\delta w\|^2+(\beta-1)\|\gamma_0 E_X^\delta w\|^2\\
&\geq {\textstyle \frac12}\Big((\gamma_\Delta^{\partial_t})^2+\alpha^2+1-\sqrt{((\gamma_\Delta^{\partial_t})^2+\alpha^2+1)^2-4 (\gamma_\Delta^{\partial_t})^2}\Big) \|E_X^\delta w\|_X^2
\end{align*}
by following the lines starting at the second line of \eqref{e8}, in particular showing that ${E_X^\delta}'G(\delta)E_X^\delta$ is invertible.

Finally, for $w \in X$, $\|Cw\|_{Y'}\leq \|\partial_t w\|_{Y'}+\alpha \|w\|_Y \leq \sqrt{1+\alpha^2}\,\|w\|_X$.
The theorem follows by combining the above estimates.
\end{proof}

\section{Inf-sup condition \eqref{cond2}, i.e., $\gamma_\Delta^{\partial_t}>0$, and condition \eqref{cond3}} \label{S5}
By the boundedness and coercivity assumptions \eqref{boundedness} and \eqref{coercivity}, it holds that
$\|\cdot\|_Y \eqsim  \|\cdot\|_{L_2(I;V)}$. Since with
\be \label{37}
\gamma^\delta:=\gamma^\delta(X^\delta,Y^\delta):=\inf_{\{w \in X^\delta \colon \partial_t w \neq 0\}} \sup_{0 \neq v \in Y^\delta}
  \frac{\int_I \langle \partial_t w, v\rangle \dif t}{\|\partial_t w\|_{L_2(I;V')}\|v\|_{L_2(I;V)}},
\ee
consequently it holds that
$\gamma_\Delta^{\partial_t} \eqsim \,\inf_{\delta \in \Delta} \gamma^\delta$,
  we will summarize some known results about settings for which $\inf_{\delta \in \Delta} \gamma^\delta>0$  has been demonstrated.

In the final subsection of this section we will briefly comment on the construction of preconditioners at the $Y$-side, i.e.~condition \eqref{cond3}, and the $X$-side.
  The preconditioner $K_Y^\delta$ has its application for the reduction of the saddle-point system \eqref{m9} (reading $(K_Y^\delta)^{-1}$ as ${E_Y^\delta}'A_sE_Y^\delta$) to the elliptic system \eqref{MRM}, and as an ingredient for building a preconditioner for the saddle-point system \eqref{BENsaddle}, whereas $K_X^\delta$  can be applied for preconditioning \eqref{MRM}, and as the other ingredient to construct a preconditioner for \eqref{BENsaddle}.
    \medskip

\new{%
  Since inf-sup or Ladyzhenskaya--Babu\v{s}ka--Brezzi (LBB) conditions of type \mbox{$\gamma^\delta>0$} will be encountered often, in an abstract framework  in the following Proposition~\ref{39} we establish their relation to existence of a Fortin operator, denoted by $Q$. Since the work of Fortin (\cite{75.22}), it is well-known that existence of such an operator implies the LBB condition.
 We show that also the converse is true, and present a quantitatively optimal statement.
Moreover, in contrast to the common presentation (although not in \cite{75.22}), in view of applications the operator $F$  in Proposition~\ref{39} is not required to be injective.
The  estimates from \cite[Lemma 26.9]{70.98}, which apply under the `continuous' inf-sup condition $\inf_{0 \neq a \in \mathscr{A}} \frac{\|Fa\|_{\mathscr{B}'}}{\|a\|_{\mathscr{A}}}>0$, are in that case similar to those from Proposition~\ref{39}, and can easily be derived from this result.

\begin{proposition} \label{39} For Hilbert spaces $\mathscr{A}$ and $\mathscr{B}$, let $F \in \cL(\mathscr{A},\mathscr{B}')$.
Let $\mathfrak{A} \subset \mathscr{A}$ and $\mathfrak{B}  \subset \mathscr{B}$ be closed subspaces with $F\mathfrak{A} \neq \{0\}$ and $\mathfrak{B} \neq \{0\}$. Let  $E_{\mathfrak{A}} \colon \mathfrak{A} \rightarrow \mathscr{A}$ and $E_{\mathfrak{B}} \colon \mathfrak{B} \rightarrow \mathscr{B}$ denote the trivial embeddings, which we sometimes write for clarity, but that we mainly introduce for their duals.
If there exists a
\be \label{38}
  Q \in \cL(\mathscr{B},\mathscr{B}) \text{ with } \ran Q \subset \mathfrak{B} \text{ and } (F  \mathfrak{A})((\identity-Q)\mathscr{B})=0,
\ee
then $\mathfrak{G}:=\inf_{\{\mathfrak{a} \in \mathfrak{A} \colon F  \mathfrak{a} \neq 0\}} \frac{\|E_{\mathfrak{B}}' F E_{\mathfrak{A}}\mathfrak{a}\|_{\mathfrak{B}'}}{\|F \mathfrak{a}\|_{\mathscr{B}'}} \geq \|Q\|_{\cL(\mathscr{B},\mathscr{B})}^{-1}$. Conversely, if \mbox{$\mathfrak{G}>0$,} and $\ran E_{\mathfrak{B}}' F E_{\mathfrak{A}}$ is closed, then
   then a $Q$ as in \eqref{38} exists, which moreover is a projector, with  $\|Q\|_{\cL(\mathscr{B},\mathscr{B})}= 1/\mathfrak{G}$.
   The condition of the closedness of $\ran E_{\mathfrak{B}}' F E_{\mathfrak{A}}$ can be replaced by $\dim \mathfrak{A}<\infty$, or by the closedness  of $\ran F$.
\end{proposition}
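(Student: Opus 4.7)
The plan is to prove the two implications separately. For the forward direction (existence of $Q$ implies $\mathfrak{G}\ge \|Q\|^{-1}$), the idea is the classical Fortin estimate: fix $\mathfrak{a}\in\mathfrak{A}$ with $F\mathfrak{a}\ne 0$, and for any $v\in\mathscr{B}$, invoke the annihilation condition in \eqref{38} to write $(FE_{\mathfrak{A}}\mathfrak{a})(v)=(FE_{\mathfrak{A}}\mathfrak{a})(Qv)$. Since $Qv\in\mathfrak{B}$, the right-hand side is at most $\|E_{\mathfrak{B}}'FE_{\mathfrak{A}}\mathfrak{a}\|_{\mathfrak{B}'}\|Qv\|_{\mathscr{B}}\le \|Q\|_{\cL(\mathscr{B},\mathscr{B})}\,\|E_{\mathfrak{B}}'FE_{\mathfrak{A}}\mathfrak{a}\|_{\mathfrak{B}'}\|v\|_{\mathscr{B}}$; taking the supremum over $v$ and rearranging yields $\mathfrak{G}\ge \|Q\|^{-1}$.

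For the converse I would construct $Q$ by a Hilbert-adjoint argument. Introduce $\tilde G\in\cL(\mathfrak{A},\mathfrak{B})$, the Riesz lift of $E_{\mathfrak{B}}'FE_{\mathfrak{A}}$, characterised by $\langle \tilde G\mathfrak{a},\mathfrak{c}\rangle_{\mathfrak{B}}=(E_{\mathfrak{B}}'FE_{\mathfrak{A}}\mathfrak{a})(\mathfrak{c})$; then $\|\tilde G\mathfrak{a}\|_{\mathfrak{B}}=\|E_{\mathfrak{B}}'FE_{\mathfrak{A}}\mathfrak{a}\|_{\mathfrak{B}'}$ and $\ran\tilde G$ is closed exactly when $\ran E_{\mathfrak{B}}'FE_{\mathfrak{A}}$ is. Given $v\in\mathscr{B}$, let $\rho_v\in\mathfrak{A}$ be the Riesz representative of the bounded functional $\mathfrak{a}\mapsto (FE_{\mathfrak{A}}\mathfrak{a})(v)$. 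The LBB hypothesis forces $\ker\tilde G\subseteq\{\mathfrak{a}:FE_{\mathfrak{A}}\mathfrak{a}=0\}$, so $\rho_v\perp\ker\tilde G$, and closedness of $\ran\tilde G$ then yields $\rho_v\in(\ker\tilde G)^{\perp}=\ran\tilde G^{*}$. I would define $Qv$ to be the unique solution in $(\ker\tilde G^{*})^{\perp}=\ran\tilde G$ of $\tilde G^{*}Qv=\rho_v$.

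Four verifications complete the proof. The Fortin identity is immediate from $(FE_{\mathfrak{A}}\mathfrak{a})(Qv)=\langle\tilde G^{*}Qv,\mathfrak{a}\rangle_{\mathfrak{A}}=\langle\rho_v,\mathfrak{a}\rangle_{\mathfrak{A}}=(FE_{\mathfrak{A}}\mathfrak{a})(v)$, and $\ran Q\subseteq\mathfrak{B}$ holds by construction. For the projector identity, a short computation gives $\rho_{Qv}=\tilde G^{*}Qv$; hence $\tilde G^{*}(Q(Qv)-Qv)=0$, and since both $Q(Qv)$ and $Qv$ lie in $(\ker\tilde G^{*})^{\perp}$, uniqueness of the solution forces $Q^{2}=Q$. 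Finally, using $Qv\in\ran\tilde G$ and parametrising $\mathfrak{c}=\tilde G\mathfrak{a}$, one computes
\[
\|Qv\|_{\mathfrak{B}}=\sup_{\mathfrak{a}:\tilde G\mathfrak{a}\ne 0}\frac{\langle Qv,\tilde G\mathfrak{a}\rangle_{\mathfrak{B}}}{\|\tilde G\mathfrak{a}\|_{\mathfrak{B}}}=\sup_{\mathfrak{a}:E_{\mathfrak{B}}'FE_{\mathfrak{A}}\mathfrak{a}\ne 0}\frac{(FE_{\mathfrak{A}}\mathfrak{a})(v)}{\|E_{\mathfrak{B}}'FE_{\mathfrak{A}}\mathfrak{a}\|_{\mathfrak{B}'}}\le \mathfrak{G}^{-1}\|v\|_{\mathscr{B}},
\]
which combined with the forward direction pins down $\|Q\|=1/\mathfrak{G}$.

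The hard part is the last norm identification: it relies on $Qv$ lying in $\ran\tilde G$, not merely in its closure, so the closed-range hypothesis on $\ran E_{\mathfrak{B}}'FE_{\mathfrak{A}}$ enters decisively here. The two alternative sufficient conditions are dispatched quickly: $\dim\mathfrak{A}<\infty$ makes $\ran E_{\mathfrak{B}}'FE_{\mathfrak{A}}$ finite-dimensional, hence closed; closedness of $\ran F$ combined with the LBB can be shown via an open-mapping argument to force closedness of $\ran E_{\mathfrak{B}}'FE_{\mathfrak{A}}$.
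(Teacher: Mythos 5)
Your proof is correct, and while it produces exactly the same operator $Q$ as the paper (namely the minimal-norm element $\mathfrak{b}\in\mathfrak{B}$ subject to the constraint $(F\mathfrak{A})(v-\mathfrak{b})=0$), you reach it by a genuinely different route. The paper characterizes $Q v$ as the first component of a saddle-point system with Lagrange multiplier $[\mathfrak{a}]\in\mathfrak{A}/\ker F$, identifies the Schur complement as a Riesz map on that quotient equipped with the norm $\|E_{\mathfrak{B}}'FE_{\mathfrak{A}}\cdot\|_{\mathfrak{B}'}$, and reads off $\|Qv\|_{\mathscr{B}}=\|E_{\mathfrak{A}}'F'v\|_{(\mathfrak{A}/\ker F)'}$; you instead work with the Riesz lift $\tilde G$ and invoke the closed range theorem to solve $\tilde G^*Qv=\rho_v$ in $(\ker\tilde G^*)^\perp=\ran\tilde G$. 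The two are dual descriptions of the same constrained minimization, and your final duality computation of $\|Qv\|$ is the mirror image of the paper's evaluation of $\|E_{\mathfrak{A}}'F'\|$. What your version buys is a construction that never leaves the ambient Hilbert spaces (no quotient norms, no mixed system) and an explicit verification of idempotence, which the paper asserts via its footnote characterization but does not spell out; what the paper's version buys is the computable saddle-point formulation and the variational interpretation of $Q$. One shared soft spot: for the alternative hypothesis ``$\ran F$ closed,'' both you and the paper assert in one line that an open-mapping argument transfers closedness to $\ran E_{\mathfrak{B}}'FE_{\mathfrak{A}}$; this step silently compares $\dist(\mathfrak{a},\ker F)$ with $\dist(\mathfrak{a},\ker F\cap\mathfrak{A})$ for $\mathfrak{a}\in\mathfrak{A}$, i.e., it uses completeness of the image of $\mathfrak{A}$ in $\mathscr{A}/\ker F$, which deserves a word (it is automatic in the injective and finite-dimensional cases relevant here). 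That caveat applies equally to the paper's own proof, so it does not count against you.
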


\begin{proof} This proof resembles that of  \cite[Thm.~3.11]{58.6}, but yields quantitatively optimal bounds.

If a $Q$ as in \eqref{38} exists, then for $\mathfrak{a} \in \mathfrak{A}$ it holds that
$$
\|F\mathfrak{a}\|_{\mathscr{B}'}=\sup_{0 \neq \beta \in \mathscr{B}}\frac{(F \mathfrak{a})(\beta)}{\|\beta\|_\mathscr{B}}=\sup_{0 \neq \beta \in \mathscr{B}}\frac{(F\mathfrak{a})(Q \beta)}{\|\beta\|_\mathscr{B}}
\leq \|Q\|_{\cL(\mathscr{B},\mathscr{B})} \sup_{0 \neq \mathfrak{b} \in \mathfrak{B}}\frac{(F\mathfrak{a})(\mathfrak{b})}{\|\mathfrak{b}\|_\mathscr{B}},
$$
or $\mathfrak{G} \geq \|Q\|_{\cL(\mathscr{B},\mathscr{B})}^{-1}$.

Now let $\mathfrak{G}>0$.
By the open mapping, the  closedness  of $\ran F$ is equivalent to $\|F[\alpha]\|_{\mathscr{B}'} \eqsim \|[\alpha]\|_{\mathscr{A}/\ker F}$ ($[\alpha] \in \mathscr{A}/\ker F$). Thanks to $\mathfrak{G}>0$, the latter implies
\be \label{infsupje}
\|E_{\mathfrak{B}}' FE_{\mathfrak{A}}[\mathfrak{a}]\|_{\mathfrak{B}'} \eqsim \|[\mathfrak{a}]\|_{\mathscr{A}/\ker F} \quad ([\mathfrak{a}] \in \mathfrak{A}/\ker F),
\ee
which in turn is equivalent to the closedness  of $\ran E_{\mathfrak{B}}' FE_{\mathfrak{A}}$. Obviously, the latter holds also true when $\dim \mathfrak{A}<\infty$.

With the Riesz map $R\colon \mathscr{B} \rightarrow \mathscr{B}'$, we define $Q\colon \mathscr{B} \rightarrow \mathfrak{B} \colon \beta \mapsto \mathfrak{b}$  with the latter being the first component\footnote{One may verify that $\mathfrak{b}=\argmin_{\{\tilde{\mathfrak{b}}\colon (F\mathfrak{A})(\beta-\tilde{\mathfrak{b}})=0\}} \|\tilde{\mathfrak{b}}\|_{\mathscr{B}}$.} of $(\mathfrak{b},[\mathfrak{a}])\in \mathfrak{B} \times \mathfrak{A}/\ker F$ that solves
$$
\left[\begin{array}{@{}cc@{}}E_{\mathfrak{B}}' RE_{\mathfrak{B}} &E_{\mathfrak{B}}' FE_{\mathfrak{A}}\\
E_{\mathfrak{A}}' F'E_{\mathfrak{B}}& 0 \end{array}\right]
\left[\begin{array}{@{}c@{}} \mathfrak{b}\\ {[\mathfrak{a}]} \end{array}\right]
=
\left[\begin{array}{@{}c@{}} 0 \\E_{\mathfrak{A}}' F' \beta \end{array}\right].
$$
We will see that this system is uniquely solvable.

We equip $\mathfrak{A}/\!\ker F$ with norm $\|E_{\mathfrak{B}}' FE_{\mathfrak{A}} \cdot\|_{{\mathfrak{B}}'}$.
Thanks to \eqref{infsupje}, with this norm and corresponding scalar product, $\mathfrak{A}/\!\ker F$ is a Hilbert space, which implies the surjectivity of the corresponding Riesz map.

One verifies that both
$E_{\mathfrak{B}}' RE_{\mathfrak{B}}\colon \mathfrak{B} \rightarrow {\mathfrak{B}}'$ and the Schur complement
$S := E_{\mathfrak{A}}' F'E_{\mathfrak{B}} (E_{\mathfrak{B}}' RE_{\mathfrak{B}})^{-1}E_{\mathfrak{B}}' FE_{\mathfrak{A}} \colon \mathfrak{A}/\!\ker F \rightarrow (\mathfrak{A}/\!\ker F)'$ are Riesz maps.
Using $S [\mathfrak{a}] = E_{\mathfrak A}' F' \beta$, we infer that
$$
\|\mathfrak{b}\|_{\mathscr{B}} = \|E_{\mathfrak{B}}' FE_{\mathfrak{A}} [\mathfrak{a}]\|_{{\mathfrak{B}}'} = \|[\mathfrak{a}]\|_{\mathfrak{A}/\!\ker F} =\|E_{\mathfrak{A}}' F' \beta\|_{(\mathfrak{A}/\!\ker F)'}.
$$
From
\begin{align*}
\|E_{\mathfrak{A}}' F'\|_{\cL(\mathscr{B},(\mathfrak{A}/\!\ker F)')}&
  =\|F E_{\mathfrak{A}}\|_{\cL(\mathfrak{A}/\!\ker F,\mathscr{B}')}
  \\
&=\sup_{\{\mathfrak{a} \in \mathfrak{A}\colon F \mathfrak{a} \neq 0\}} \inf_{0 \neq \mathfrak{b} \in \mathfrak{B}} \frac{\|F \mathfrak{a}\|_{\mathscr{B}'} \|\mathfrak{b}\|_\mathscr{B}}{(F \mathfrak{a})(\mathfrak{b})} = 1/\mathfrak{G},
\end{align*}
we conclude that $\|Q\|_{\cL(\mathscr{B},\mathscr{B})} = 1/\mathfrak{G}$, which completes the proof.
\end{proof}%
}

\subsection{`Full' tensor product case} \label{full}
Concerning the verification of $\inf_{\delta \in \Delta} \gamma^\delta>0$, we start with the easy case of $X^\delta$ and $Y^\delta$ being \emph{`full' tensor products} of approximation spaces in time and space (as opposed to sparse tensor products, see below).
With $Y_t:=L_2(I)$ and $X_t:=H^1(I)$, for $Z\in \{X,Y\}$ let $(Z^\delta_t)_{\delta \in \Delta}$ and $(Z^\delta_{\bf x})_{\delta \in \Delta}$ be families of closed subspaces of $Z_t$ and $V$, respectively, and let $Z^\delta:=Z_t^\delta \otimes Z_{\bf x}^\delta$.
Assuming that
\begin{align} \label{subt}
\gamma^\delta_t &:= \inf_{\{w \in X^\delta_t \colon w' \neq 0\}}
\sup_{0 \neq v \in Y^\delta_t} \frac{\int_I w' v \dif t}{\|w'\|_{L_2(I)}\|v\|_{L_2(I)}} \gtrsim 1,\\ \label{subx}
  \gamma^\delta_{\bf x} &:= \inf_{0 \neq w \in X^\delta_{\bf x}}
  \sup_{0 \neq v \in Y^\delta_{\bf x}} \frac{\langle w, v \rangle }{\|w\|_{V'} \|v\|_V } \gtrsim 1,
\end{align}
a tensor product argument shows that
$$
\gamma^{\delta}= \gamma^\delta_t \gamma^\delta_{\bf x} \gtrsim 1.
$$

Obviously, \eqref{subt} is true when $\frac{\dif}{\dif t} X^\delta_t \subseteq Y^\delta_t$, which however is not a necessary condition. For example, when $X^\delta_t$ is the space of continuous piecewise linears w.r.t.~some partition of $I$, and
$Y^\delta_t$ is the space of continuous piecewise linears w.r.t.~a once dyadically refined partition, an easy computation  (\cite[Prop.~6.1]{11}) shows that $\gamma^\delta_t \geq \sqrt{3/4}$.

Considering, for a domain $\Omega \subset \R^d$ and $\Gamma \subset \partial\Omega$,  $H=L_2(\Omega)$ and $V=H^1_{0,\Gamma}(\Omega):=\{v \in H^1(\Omega)\colon v|_\Gamma=0\}$, $H^1(\Omega)$-stability of the $L_2(\Omega)$-orthogonal projector onto Lagrange finite element spaces $X^\delta_{\bf x}=Y^\delta_{\bf x}$ is an extensively studied subject. In view of Proposition~\ref{39}, taking $F$ to be the Riesz map $H \rightarrow H'$ viewed as a mapping $V \rightarrow V'$, this stability implies \eqref{subx}.
For finite element spaces w.r.t.~shape regular quasi-uniform partitions into, say, $d$-simplices, where $\Gamma$ is the union of faces of $T \in \tria$, this stability follows easily from direct and inverse estimates. It is known that this stability holds also true for (shape regular) locally refined partitions when they are sufficiently mildly graded. In \cite{75.3675}, it is shown that in two space dimensions the meshes generated by newest vertex bisection satisfy this requirement, see also \cite{64.595} for extensions.

\subsection{Sparse tensor product case}
As shown in \cite[Prop.~4.2]{11}, these results for full tensor products extend to \emph{sparse tensor products}.
When $(Z^\delta_t)_{\delta \in \Delta}$ and $(Z^\delta_{\bf x})_{\delta \in \Delta}$ are nested sequences of closed subspaces
$Z_t^{\delta_0} \subset Z_t^{\delta_1} \subset \cdots \subset Z_t$, $Z_{\bf x}^{\delta_0} \subset Z_{\bf x}^{\delta_1} \subset \cdots \subset V$ that satisfy \eqref{subt}--\eqref{subx}, then for $Z^{\delta_n}:=\sum_{\{0 \leq n_t+n_{\bf x}\leq n\}} Z_t^{\delta_{n_t}} \otimes Z_{\bf x}^{\delta_{n_{\bf x}}}$ it holds that
$$
\gamma^{\delta_n} \geq \min_{0 \leq n_t \leq n} \gamma_t^{\delta_{n_t}}\min_{0 \leq n_{\bf x} \leq n}\gamma_t^{\delta_{n_{\bf x}}} \gtrsim 1.
$$
\medskip

\subsection{Time-slab partition case}
Another extension of the full tensor product case is given by the following.
Let $(\bar{X}^\delta, \bar{Y}^\delta)_{\delta \in \bar{\Delta}}$ be a family of pairs of closed subspaces of $X$ and $Y$
for which
$$
\gamma_{\bar{\Delta}}:=\inf_{\delta \in \bar{\Delta}}\inf_{\{w \in \bar{X}^\delta \colon \partial_t w \neq 0\}} \sup_{0 \neq v \in \bar{Y}^\delta}
  \frac{\int_I \langle \partial_t w, v\rangle \dif t}{\|w\|_{L_2(I;V')}\|v\|_{L_2(I;V)}}>0.
  $$
Then if, for $\delta \in \Delta$, $X^\delta$ and $Y^\delta$ are such that for some
finite partition $I^\delta=([t_{i-1}^\delta,t_{i}^\delta])_i$ of $I$, with
$G^\delta_i(t):=t_{i-1}^\delta+\frac{t}{T}(t_{i}^\delta-t_{i-1}^\delta)$ and arbitrary $\delta_i \in \bar{\Delta}$ it holds that
\begin{align*}
X^\delta \subseteq \{u \in X &\colon u|_{(t^\delta_{i-1},t^\delta_i)} \circ G^\delta_i \in \bar{X}^{\delta_i}\},\\
Y^\delta \supseteq \{v \in L_2(I;V) &\colon v|_{(t^\delta_{i-1},t^\delta_i)} \circ G^\delta_i \in \bar{Y}^{\delta_i}\},
\end{align*}
then $\gamma^\delta \geq \gamma_{\bar{\Delta}}>0$ as one easily verifies by writing $\int_I  \langle \frac{\dif u}{\dif t}, v\rangle \dif t=\sum_i \int_{t_{i-1}}^{t_i} \langle \frac{\dif u}{\dif t}, v\rangle \dif t$. An example of this \emph{`time-slab partition'} setting will be given in Sect.~\ref{Sconcrete}. Thinking of the $\bar{X}^\delta$ as being finite element spaces, notice that the condition $X^\delta \subset X$ will require that possible `hanging nodes' on the interface between different time slabs do not carry degrees of freedom.
\medskip

\subsection{Generalized sparse tensor product case}
Finally, we informally describe a \emph{`generalized' sparse tensor product} setting that allows for \emph{local refinements} driven by an a posteriori error estimator.
For $Z \in \{X,Y\}$, let the nested sequences of closed subspaces $Z_t^{\delta_0} \subset Z_t^{\delta_1} \subset \cdots \subset Z_t$, $Z_{\bf x}^{\delta_0} \subset Z_{\bf x}^{\delta_1} \subset \cdots \subset V$ be equipped with hierarchical bases, meaning that the basis for $Z_t^{\delta_i}$ (analogously $Z_{\bf x}^{\delta_i}$) is inductively defined as the basis for $Z_t^{\delta_{i-1}}$ plus a basis for a complement space of $Z_t^{\delta_{i-1}}$ in $Z_t^{\delta_{i}}$. The \emph{level} of the functions in the latter basis is defined as $i$.

Let us consider the usual case that the diameter of the support of a hierarchical basis function with level $i$ is $\eqsim 2^{-i}$, and let us assign to each basis function $\phi$ on level $i>0$ one (or a few) parents with level $i-1$ whose supports intersect the support of $\phi$.
We now let $(Z^\delta)_{\delta \in \Delta}$ be the collection of all spaces that are spanned by sets of product hierarchical basis functions, which sets are \emph{downward closed} (or \emph{lower}) in the sense that if a product of basis functions
is in the set, then so are all their parents in both directions.
Note that the sparse tensor product spaces  $\sum_{\{0 \leq n_t+n_{\bf x}\leq n\}} Z_t^{\delta_{n_t}} \otimes Z_{\bf x}^{\delta_{n_{\bf x}}}$ are included in this collection, but that it contains many more spaces.

Under conditions on the hierarchical bases for $Z_t^{\delta_0} \subset Z_t^{\delta_1} \subset \cdots \subset Z_t$ for $Z \in \{X,Y\}$, that should be of \emph{wavelet-type}, in \cite{249.991} it is shown that to any $X^\delta$ one can assign a
 $Y^\delta$ with $\dim Y^\delta \lesssim \dim X^\delta$, such that $\gamma^\delta \gtrsim 1$ holds.

\subsection{Preconditioners}
Moving to condition \eqref{cond3}, obviously we would like to construct $K_Y^\delta$ such that it is not only a uniform preconditioner, i.e., it satisfies \eqref{cond3}, but also that its application can be performed in ${\mathcal O}(\dim Y^\delta)$ operations. In the full-tensor product case, after selecting bases for $Y^\delta_t$ and $Y^\delta_{\bf x}$, the construction of
$K_Y^\delta$ boils down to tensorizing approximate inverses of the `mass matrix' in time, which does not pose any problems, and the `stiffness matrix' in space.
For $V=H^1(\Omega)$ (or a subspace of aforementioned type), it is well-known that by taking a multi-grid preconditioner
as the approximate inverse of the stiffness matrix the resulting $K_Y^\delta$ satisfies our needs.
A straightforward generalization of this construction of $K_Y^\delta$ applies to spaces $Y^\delta$ that correspond to the time-slab partitioning approach.
\medskip

Finally, for the efficient iterative solution of \eqref{MRM} or \eqref{BENsaddle}, one needs a $K_X^\delta={K_X^\delta}' \in
\Lis({X^\delta}',X^\delta)$ whose norm and norm of its inverse are uniformly bounded, and whose application can be performed in ${\mathcal O}(\dim X^\delta)$ operations.
For the full/sparse and generalized sparse tensor product setting such preconditioners have been constructed in \cite{12.5} and \cite{249.991}, respectively.

\section{Robustness} \label{S6}
The quasi-optimality results presented in Theorems~\ref{thm:mainMRM} and \ref{thm:mainBEN} for MR and BEN degenerate when $\alpha=\|A_a\|_{\cL(Y,Y')} \rightarrow \infty$. Aiming at results that are robust for $\alpha \rightarrow \infty$, we now study convergence w.r.t.~the energy-norm $\nrm\cdot\nrm_X$ on $X$. On its own this change of norms turns out not to be helpful. By replacing $\|\cdot\|_X$ by $\nrm\cdot\nrm_X$ in Theorems~\ref{thm:mainMRM} and \ref{thm:mainBEN}, and adapting their proofs in an obvious way yields for MR the same upper bound for $\frac{\nrm u-u^\delta \nrm_X}{\inf_{w \in X^\delta} \nrm u - w \nrm_X}$ as we found for $\frac{\|u-u^\delta\|_X}{\inf_{w \in X^\delta} \| u - w \|_X}$ (for $u \not\in X^\delta$),
whereas instead of Theorem~\ref{thm:mainBEN} we arrive at the only slightly more favourable bound
$$
\nrm u-\bar{u}^\delta\nrm_X \leq
{\textstyle
\frac{2+\alpha^2+\alpha\sqrt{\alpha^2+4}}
{(\gamma_\Delta^{\partial_t})^2+\alpha^2+1-\sqrt{((\gamma_\Delta^{\partial_t})^2+\alpha^2+1)^2-4 (\gamma_\Delta^{\partial_t})^2}}}
\inf_{w \in X^\delta,\,v \in Y^\delta} \nrm u-w\nrm_X+ \|u-v\|_Y,
$$
which is, however, still far from being robust.

In order to obtain robust bounds, instead of the condition $\gamma_\Delta^{\partial_t}>0$ (\eqref{cond2}) we now impose
\be \label{cond2bis}
\gamma_\Delta^C:=\inf_{\delta \in \Delta} \inf_{\{0 \neq w \in X^\delta\colon C E_X^\delta w \neq 0\}} \frac{\|{E_Y^\delta}'C E_X^\delta w\|_{{Y^\delta}'}}{\| C E_X^\delta w\|_{Y'}} >0,
\ee
which, when considering a family of operators $A$, we would like to hold uniformly for $\alpha  \rightarrow \infty$.

\begin{theorem} \label{thm13} Under conditions \eqref{cond1}, \eqref{cond2bis}, and \eqref{cond3}, the solution $u^\delta \in X^\delta$ of \eqref{MRM} satisfies
\begin{align} \label{bndMR}
\nrm u-u^\delta\nrm_X &\leq
\sqrt{\tfrac{\max(R_\Delta,1)}{\min(r_\Delta,1)}}\, (\gamma_\Delta^C)^{-1}
 \inf_{w \in X^\delta} \nrm u-w\nrm_X;
\intertext{and under condition \eqref{cond2bis}, the  solution $\bar{u}^\delta \in X^\delta$ of \eqref{BEN} satisfies} \label{bndBEN}
\nrm u-\bar{u}^\delta\nrm_X &\leq (\gamma_\Delta^C)^{-2}\big\{
\inf_{w \in X^\delta} \nrm u-w\nrm_X+ \inf_{v \in Y^\delta} \|u-v\|_Y\big\}.
\end{align}
\end{theorem}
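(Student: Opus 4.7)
My plan is to repeat the proofs of Theorems~\ref{thm:mainMRM} and \ref{thm:mainBEN} almost verbatim, simply replacing $\|\cdot\|_X$ by $\nrm\cdot\nrm_X$ throughout and using \eqref{cond2bis} in place of \eqref{cond2}. The observation that makes the whole estimate collapse to something robust in $\alpha$ is the identity \eqref{BC}, which gives
\[
\nrm w\nrm_X^2 = \|Bw\|_{Y'}^2+\beta\|\gamma_0 w\|^2 = \|Cw\|_{Y'}^2+\|w\|_Y^2+\|\gamma_T w\|^2+(\beta-1)\|\gamma_0 w\|^2.
\]
So the energy norm splits exactly into the four nonnegative summands that the ``coercivity side'' \eqref{e8} produces; no Young-inequality juggling involving $\alpha$ and the inf-sup constant is needed, and no appeal to Proposition~\ref{prop:continuous} is necessary.

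For \eqref{bndMR}, I would again introduce the projector $P^\delta\in\cL(X,X)$ onto $X^\delta$ sending $u$ to $u^\delta$. Since $\nrm\cdot\nrm_X$ is a Hilbert norm on $X$ (in view of Theorem~\ref{thm0}), the identity $\|P^\delta\|=\|\identity-P^\delta\|$ remains valid in this norm, so it suffices to bound $\nrm P^\delta w\nrm_X$ in terms of $\nrm w\nrm_X$. The chain \eqref{e6} shortens: its right-hand side now reads $\tfrac{\max(R_\Delta,1)}{\min(r_\Delta,1)}\nrm w\nrm_X^2$ immediately, without invoking Proposition~\ref{prop:continuous}. For the lower estimate, \eqref{equality} together with \eqref{cond2bis} yields
\[
\bigl(({E_X^\delta}'B'E_Y^\delta({E_Y^\delta}'A_sE_Y^\delta)^{-1}{E_Y^\delta}'BE_X^\delta+\beta{E_X^\delta}'\gamma_0'\gamma_0 E_X^\delta)w^\delta\bigr)(w^\delta)\geq(\gamma_\Delta^C)^2\nrm E_X^\delta w^\delta\nrm_X^2,
\]
since $\gamma_\Delta^C\leq 1$ (which follows at once from $\|{E_Y^\delta}'g\|_{{Y^\delta}'}\leq\|g\|_{Y'}$) allows one to absorb the remaining three summands into themselves. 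Combining as in \eqref{e5}--\eqref{e8} delivers \eqref{bndMR}.

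For \eqref{bndBEN}, the two-term quasi-best-approximation splitting of $u-\bar{u}^\delta$ at the start of the proof of Theorem~\ref{thm:mainBEN} is independent of the norm on $X$ and can be reused verbatim. Measuring the three operator norms that appear in it with respect to $\nrm\cdot\nrm_X$ gives clean bounds: $\|G(\delta)\|_{\cL((X,\nrm\cdot\nrm_X),(X,\nrm\cdot\nrm_X)')}\leq 1$ because $(G(\delta)w)(w)\leq\|Bw\|_{Y'}^2+\beta\|\gamma_0 w\|^2=\nrm w\nrm_X^2$; $\|C\|_{\cL((X,\nrm\cdot\nrm_X),Y')}\leq 1$ directly from the decomposition displayed above; and $\|({E_X^\delta}'G(\delta)E_X^\delta)^{-1}\|_{\cL((X^\delta,\nrm\cdot\nrm_X)',(X^\delta,\nrm\cdot\nrm_X))}\leq(\gamma_\Delta^C)^{-2}$ by the same argument as in the MR case. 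Assembling produces \eqref{bndBEN}. No genuine obstacle arises: the asymmetric part $A_a$ is made invisible by the algebraic rearrangement~\eqref{BC}, and everything else is pure bookkeeping that mirrors the previous two proofs.
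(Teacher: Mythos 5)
Your proposal is correct and follows essentially the same route as the paper: the authors likewise drop the final (Proposition~\ref{prop:continuous}-based) inequality in \eqref{e6}, replace the first inequality in \eqref{e8} by the direct application of \eqref{cond2bis} combined with $\gamma_\Delta^C\le 1$ and the identity \eqref{BC} to get the lower bound $(\gamma_\Delta^C)^2\nrm E_X^\delta w^\delta\nrm_X^2$, and for BEN rerun the proof of Theorem~\ref{thm:mainBEN} with the bounds $\|C\|_{\cL(X,Y')}\le 1$, $\|G(\delta)\|_{\cL(X,X')}\le 1$, and $\|({E_X^\delta}'G(\delta)E_X^\delta)^{-1}\|\le(\gamma_\Delta^C)^{-2}$ in the energy norm.
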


\begin{proof}
The first estimate follows from ignoring the last inequality in \eqref{e6}, and by replacing the first inequality in \eqref{e8} by
\begin{align*}
&\|{E_Y^\delta}' C E_X^\delta w^\delta\|_{{Y^\delta}'}^2+\|E_X^\delta w^\delta\|_Y^2+\|\gamma_T E_X^\delta w^\delta\|^2+(\beta-1)\|\gamma_0 E_X^\delta w^\delta\|^2\\
& \geq (\gamma_\Delta^C)^2 \Big( \|C E_X^\delta w^\delta\|_{Y'}^2+\|E_X^\delta w^\delta\|_Y^2+\|\gamma_T E_X^\delta w^\delta\|^2+(\beta-1)\|\gamma_0 E_X^\delta w^\delta\|^2\Big)\\
&=
(\gamma_\Delta^C)^2
\big(({E_X^\delta}' B' A_s^{-1}  B E_X^\delta
+{E_X^\delta}' \beta \gamma_0' \gamma_0 E_X^\delta)w^\delta\big)(w^\delta)
=(\gamma_\Delta^C)^2 \nrm w^\delta\nrm_X^2.
\end{align*}

By following the proof of Theorem~\ref{thm:mainBEN}, recalling that  now $X$ is equipped with $\nrm\cdot\nrm_X$, from $\|C\|_{\cL(X,Y')} \leq 1$,
$\|G(\delta)\|_{\cL(X,X')} \leq 1$, and
$\|({E_X^\delta}' G(\delta) E_X^\delta)^{-1}\|_{\cL({X^\delta}',X^\delta)} \leq (\gamma_\Delta^C)^{-2}$, one infers the estimate for BEN.
\end{proof}

We conclude that for a family of (asymmetric) operators $A$ robustness w.r.t.~\mbox{$\nrm\cdot\nrm_X$} is obtained when $(\gamma_\Delta^C)^{-1}$ is uniformly bounded for $\alpha=\|A_a\|_{\cL(Y,Y')}  \rightarrow \infty$.
A family for which this will be realized is presented in Sect.~\ref{Sconcrete}.

\subsection{A posteriori error estimation}
In particular because for $\alpha=\|A_a\|_{\cL(Y,Y')}  \rightarrow \infty$ meaningful a priori error bounds for $\inf_{w \in X^\delta} \nrm u-w\nrm_X$ will be hard to derive, it is important to have (robust) a posteriori error bounds.

Let $Q_B^\delta \in \cL(Y,Y)$ be such that $\ran Q_B^\delta \subset Y^\delta$ and $(\identity-{Q_B^\delta}') B X^\delta=0$. Then,
with $e_{\rm osc}^\delta(g):=\|(\identity -{Q_B^\delta}')g\|_{Y'}$, for $w \in X^\delta$ and $u$ the solution of \eqref{x12} it holds that

\begin{align*}
r_\Delta\|{E_Y^\delta}' (g-&Bw)\|_{K_Y^\delta}^2+\beta\|u_0-\gamma_0 w\|^2
\leq \nrm u-w\nrm_X^2
\leq
\\ &\big(\|Q_B^\delta\|_{\cL(Y,Y)} \sqrt{R_\Delta} \|{E_Y^\delta}'(g-Bw)\|_{K_Y^\delta}+e_{\rm osc}^\delta(g)\big)^2+\beta\|u_0-\gamma_0 w\|^2,
\end{align*}
which follows from $\|g-Bw\|_{{Y^\delta}'} \leq \|g-Bw\|_{Y'} \leq \|{Q_B^\delta}'(g-Bw)\|_{Y'}+e_{\rm osc}^\delta(g)$.

We infer that if $\sup_{\delta \in \Delta} \|Q_B^\delta\|_{\cL(Y,Y)} <\infty$, then the a posteriori error estimator
\be\label{eqn:apost}
\cE^\delta(w; g,u_0,\beta):=\sqrt{\|{E_Y^\delta}'(g-Bw)\|^2_{K_Y^\delta}+\beta\|u_0-\gamma_0 w\|^2}
\ee
is an efficient and, modulo the \emph{data oscillation term} $e_{\rm osc}^\delta(g)$, reliable estimator  of the error $\nrm u-w\nrm_X$.
If $\sup_{\delta \in \Delta}\|Q_B^\delta\|_{\cL(Y,Y)}$ and $\frac{\max(R_\Delta,1)}{\min(r_\Delta,1)}$ are bounded uniformly in $\alpha \rightarrow \infty$,
then this estimator is even robust.

\begin{remark}
In view of Proposition~\ref{39}, the aforementioned assumptions $\ran Q_B^\delta \subset Y^\delta$, $(\identity-{Q_B^\delta}') B X^\delta=0$, and $\sup_{\delta \in \Delta} \|Q_B^\delta\|_{\cL(Y,Y)} <\infty$ are equivalent to
$$
\gamma_\Delta^B:=\inf_{\delta \in \Delta} \inf_{\{0 \neq w \in X^\delta\colon B E_X^\delta w \neq 0\}} \frac{\|{E_Y^\delta}'B E_X^\delta w\|_{{Y^\delta}'}}{\| B E_X^\delta w\|_{Y'}} >0.
$$
  In applications the conditions $\gamma_\Delta^{\partial_t}>0$, $\gamma_\Delta^C>0$, and $\gamma_\Delta^B>0$ are increasingly more difficult to fulfill.
\end{remark}

To have a meaningful reliability result, in addition we would like to find above $Q_B^\delta$ such that, for sufficiently smooth $g$, the term $e_{\rm osc}^\delta(g)$ is asymptotically, i.e.~for  the `mesh-size' tending to zero, of equal or higher order than the approximation error $\inf_{w \in X^\delta}\nrm u-w\nrm_X$. We will realize this in the setting that will be discussed in Sect.~\ref{Sapost}.

\section{Spatial differential operators with dominating asymmetric part} \label{Sconcrete}
For some domain $\Omega \subset \R^d$, and $\Gamma \subset \partial\Omega$, let
\be \label{bil}
\begin{split}
& H:=L_2(\Omega),\,V:=H^1_{0,\Gamma}(\Omega):=\{v \in H^1(\Omega)\colon v|_{\Gamma}=0\},\\
& a(t;\eta,\zeta):=\int_\Omega \eps \nabla \eta \cdot \nabla \zeta+ ({\bf b}\cdot \nabla \eta + e \eta) \zeta \dif{\bf x}, \quad\eps>0,\\
& {\bf b} \in L_\infty(I;L_\infty(\divv; \Omega)),\,\, e \in L_\infty(I \times \Omega),\,\, \essinf(e-{\textstyle \frac12} \divv_{\bf x} {\bf b}) \geq 0,
\end{split}
\ee
and $|\Gamma|>0$ when the latter $\essinf$ is zero, so that \eqref{boundedness} and \eqref{coercivity} are valid.
In this setting, the operators $A_a$, $A_s=A_s(\eps)$, and so $A=A(\eps)=A_s(\eps)+A_a$, are given by
\begin{align*}
(A_a w)(v)&=\int_I \int_\Omega({\bf b}\cdot\nabla_{\bf x} w+{\textstyle \frac12} w \divv_{\bf x} {\bf b}) v \dif{\bf x}\dif t,\\
(A_s(\eps) w)(v)&=\int_I \int_\Omega \eps \nabla_{\bf x} w \cdot \nabla_{\bf x} v+(e-{\textstyle \frac12}\divv_{\bf x} {\bf b})w v\dif{\bf x}\dif t.
\end{align*}
Thinking of ${\bf b}$ and $e$ fixed, and variable $\eps>0$, one infers that $\alpha=\alpha(\eps) \rightarrow \infty$ when $\eps \downarrow 0$ (cf. Remark~\ref{relative}).

In the next subsection we will construct $(X^\delta)_{\delta \in \Delta} \subset X$ and $(Y^\delta)_{\delta \in \Delta} \subset Y$
that (essentially) satisfy $\inf_{\eps>0} \gamma_\Delta^C(\eps)>0$
as families of finite element spaces w.r.t. subdivisions of $I \times \Omega$ into time-slabs with prismatic elements in each slab w.r.t.~generally different partitions of $\Omega$. Notice that although $C=\partial_t+A_a$ is independent of $\eps$, $\gamma_\Delta^C(\eps)$ depends on $\eps$ because it is defined in terms of the $\eps$-dependent energy-norm $\|\cdot\|_Y=\sqrt{(A_s(\eps)\cdot)(\cdot)}$.

As a consequence of $\gamma_\Delta^C(\eps)$ being uniformly positive,
for $K_Y^\delta \eqsim ({E_Y^\delta}' A_s E_Y^\delta)^{-1}$ uniformly in $\eps$ and $\delta$, i.e., $\sup_{\eps>0} \frac{\max(R_\Delta,1)}{\min(r_\Delta,1)}<\infty$,
Theorem~\ref{thm13} gives $\eps$-robust quasi-optimality results for MR and BEN w.r.t.~the $\eps$- (and $\beta$-) dependent norm $\nrm\cdot\nrm_X$.

\subsection{Realization of $\inf_{\eps} \gamma_\Delta^C(\eps)>0$}
Given a conforming partition $\tria$ of a polytopal $\overline{\Omega}$ into (essentially disjoint) closed $d$-simplices, we define $\mathcal{S}^{-1,q}_{\tria}$ as the space of all (discontinuous) piecewise polynomials of degree $q$ w.r.t.~$\tria$, and, for $q \geq 1$, set
$$
\mathcal{S}^{0,q}_{\tria,0}:=\mathcal{S}^{-1,q}_{\tria} \cap H_{0,\Gamma}^1(\Omega),
$$
where we assume that $\Gamma$ is the union of faces of $T \in \tria$.

Let $(\tria^\delta)_{\delta \in \bar{\Delta}}$,  $(\tria_S^\delta)_{\delta \in \bar{\Delta}}$ \new{be families} of such partitions of $\overline{\Omega}$ that are uniformly shape regular (which for $d=1$ should be read as to satisfy a uniform K-mesh property), and
where $\tria_S^\delta$ is a refinement of $\tria^\delta$ of some \emph{fixed} maximal depth in the sense that $|T| \gtrsim |T'|$ for
$\tria^\delta_S \ni T \subset T' \in \tria^\delta$, so that $\dim \tria_S^\delta \lesssim \dim \tria^\delta$.
On the other hand, fixing a $q \geq 1$, we require that the refinement from $\tria^\delta$ to $\tria_S^\delta$ is sufficiently deep that it permits the construction of a
projector $P_q^\delta$ for which
\begin{align} \label{35relax}
&\ran P_q^\delta \subseteq \mathcal{S}^{0,q}_{\tria_S^\delta,0},\quad\ran (\identity - P_q^\delta) \perp_{L_2(\Omega)} \big(\mathcal{S}^{0,q}_{\tria^\delta,0}+\mathcal{S}^{-1,q-1}_{\tria^\delta}\big),\\
 \label{34relax}
& \|P_q^\delta w\|_{L_2(T)}  \lesssim \|w\|_{L_2(T)} \quad (T\in \tria^\delta,\,w \in L_2(\Omega)).
\end{align}

As shown in \cite[Lemma~5.1 and Rem.~5.2]{58.6}, regardless of the refinement rule (e.g.~red-refinement \new{or} newest vertex bisection) that is (recursively) applied to create $(\tria_S^\delta)_{\delta \in \bar{\Delta}}$ from $(\tria^\delta)_{\delta \in \bar{\Delta}}$, there is a refinement of some fixed depth that suffices to satisfy \eqref{34relax} as well as
\be \label{35}
\ran P_q^\delta \subseteq \{w \in \mathcal{S}^{0,q}_{\tria_S^\delta,0}\colon w|_{\cup_{T \in \tria} \partial T}=0\},\quad\ran (\identity - P_q^\delta) \perp_{L_2(\Omega)} \mathcal{S}^{-1,q}_{\tria^\delta,0}.
\ee
Condition~\eqref{35} is stronger than \eqref{35relax}, and will be relevant in Sect.~\ref{Sapost} on robust a posteriori error estimation.

For $d \in \{1,2,3\}$ and $q \in \{1,2,3\}$, and both newest vertex bisection and red-refinement it was verified that it is sufficient that the aformentioned depth creates in the space $\mathcal{S}^{0,q}_{\tria_S^\delta,0}$ an additional number of degrees of freedom interior to any $T \in \tria^\delta$ that is greater or equal to ${q+d\choose q}$.

\begin{remark}
  To satisfy condition \eqref{35relax}--\eqref{34relax} generally a smaller number of degrees of freedom interior to any $T \in \tria^\delta$ suffices.
  For $d=2=q$, in \cite[Appendix A]{58.6} it was shown that in order to satisfy \eqref{35relax}--\eqref{34relax} it is sufficient to create $\tria_s^\delta$ from $\tria^\delta$ by one red-refinement, which creates only three of such degrees of freedom, whereas to satisfy \eqref{34relax}--\eqref{35} six additional interior degrees of freedom are needed.
\end{remark}

We show robustness of MR and BEN in a time-slab partition setting.

\begin{theorem} \label{thm:robust1} Let $H$, $V$, and $a(\cdot;\cdot,\cdot)$ be as in \eqref{bil}, with \emph{constant} ${\bf b}$ and \emph{constant} $e \geq 0$, and let $(\tria^\delta)_{\delta \in \bar{\Delta}}$ and  $(\tria_S^\delta)_{\delta \in \bar{\Delta}}$ be as specified above.
Then if, for $\delta \in \Delta$, $X^\delta$ and $Y^\delta$ are such that for some
finite partition $I^\delta=([t_{i-1}^\delta,t_{i}^\delta])_i$ of $I$, and arbitrary $\delta_i \in \bar{\Delta}$,
\begin{align}
X^\delta & \subseteq \{w \in C(I;H^1_{0,\Gamma}(\Omega)) \colon w|_{(t_{i-1}^\delta,t_{i}^\delta)} \in {\mathcal P}_q(t_{i-1}^\delta,t_{i}^\delta) \otimes {\mathcal S}^{0,q}_{\tria^{\delta_i},0}\},\\ \nonumber
Y^\delta & \supseteq\{v \in L_2(I;H^1_{0,\Gamma}(\Omega)) \colon v|_{(t_{i-1}^\delta,t_{i}^\delta)} \in {\mathcal P}_q(t_{i-1}^\delta,t_{i}^\delta) \otimes {\mathcal S}^{0,q}_{\tria_S^{\delta_i},0}\},
\end{align}
then $\inf_{\eps>0} \gamma_\Delta^C(\eps)>0$. Consequently the bounds \eqref{bndMR} and \eqref{bndBEN} show quasi-optimality of MR and BEN w.r.t.~the ($\eps$- and $\beta$-dependent) norm $\nrm\cdot\nrm_X$, uniformly in $\eps>0$ and $\beta\geq 1$.
\end{theorem}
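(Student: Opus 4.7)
The plan is to reduce the inf-sup claim $\inf_{\eps>0}\gamma_\Delta^C(\eps)>0$ to the construction, for each $\delta \in \Delta$, of a ``Fortin-type'' operator $Q^\delta \in \cL(Y,Y)$ satisfying $\ran Q^\delta \subseteq Y^\delta$, $(Cw)(v - Q^\delta v)=0$ for all $w \in X^\delta$, $v \in Y$, and $\sup_{\eps>0}\sup_{\delta \in \Delta}\|Q^\delta\|_{\cL(Y,Y)}<\infty$. Proposition~\ref{39}, applied with $\mathscr{A}=X$, $\mathscr{B}=Y$, $F=C$, $\mathfrak{A}=X^\delta$, $\mathfrak{B}=Y^\delta$, then yields $\gamma_\Delta^C(\eps) \geq \sup_\delta\|Q^\delta\|^{-1}$ uniformly in $\eps$, and the MR/BEN quasi-optimality conclusions follow from Theorem~\ref{thm13}.

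The natural candidate is slab-wise tensorial: on each slab $I_i^\delta:=(t_{i-1}^\delta,t_i^\delta)$, set $Q^\delta|_{I_i^\delta}:=Q_{t,i}^\delta\circ P_q^{\delta_i}$, where $P_q^{\delta_i}$ acts pointwise in $t$ as the spatial projector from \eqref{35relax}--\eqref{34relax}, and $Q_{t,i}^\delta$ acts pointwise in ${\bf x}$ as the $L_2(I_i^\delta)$-orthogonal projection onto ${\mathcal P}_q(I_i^\delta)$; the two commute. Since $\ran P_q^{\delta_i}\subseteq\mathcal{S}^{0,q}_{\tria_S^{\delta_i},0}$, the hypothesis on $Y^\delta$ gives $\ran Q^\delta\subseteq Y^\delta$.

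Because ${\bf b}$ is constant, $\divv_{\bf x}{\bf b}=0$ and so $Cw=\partial_t w+{\bf b}\cdot\nabla_{\bf x} w$. For $w\in X^\delta$,
$$
Cw|_{I_i^\delta}\in \bigl({\mathcal P}_{q-1}(I_i^\delta)\otimes \mathcal{S}^{0,q}_{\tria^{\delta_i},0}\bigr)+\bigl({\mathcal P}_q(I_i^\delta)\otimes \mathcal{S}^{-1,q-1}_{\tria^{\delta_i}}\bigr).
$$
Writing $v-Q^\delta v=(I-Q_{t,i}^\delta)v+Q_{t,i}^\delta(I-P_q^{\delta_i})v$ on slab $i$, I would check four integrals via Fubini and commutativity: the summand $(I-Q_{t,i}^\delta)v$ pairs to zero against any factor whose temporal polynomial degree is $\leq q$, and $Q_{t,i}^\delta(I-P_q^{\delta_i})v$ pairs to zero against any factor whose spatial part lies in $\mathcal{S}^{0,q}_{\tria^{\delta_i},0}+\mathcal{S}^{-1,q-1}_{\tria^{\delta_i}}$, by \eqref{35relax}. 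All four contributions vanish slab-by-slab, yielding the Fortin identity.

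The main obstacle is the $\eps$-uniform boundedness of $Q^\delta$ in $\|\cdot\|_Y$. Since $\|v\|_Y^2=\eps\|\nabla_{\bf x} v\|_{L_2(I\times\Omega)}^2+e\|v\|_{L_2(I\times\Omega)}^2$ with non-negative weights, it suffices to bound $Q^\delta$ uniformly in both $L_2(I\times\Omega)$ and $L_2(I;H^1_{0,\Gamma}(\Omega))$. Each $Q_{t,i}^\delta$ is an $L_2$-contraction and commutes with $\nabla_{\bf x}$, hence is stable in both norms. The $L_2$-stability of $P_q^{\delta_i}$ is exactly \eqref{34relax}. The delicate point is the $H^1$-stability of $P_q^{\delta_i}$: this is not part of the hypotheses \eqref{35relax}--\eqref{34relax} per se, but is extracted from the construction underlying those properties by combining \eqref{34relax}, inverse inequalities on the shape-regular refinement $\tria_S^{\delta_i}$, a Scott--Zhang-type quasi-interpolant $\pi_h$ into $\mathcal{S}^{0,q}_{\tria^{\delta_i},0}$ with local approximation $\|v-\pi_h v\|_{L_2(T)}\lesssim h_T\|\nabla_{\bf x} v\|_{L_2(\omega_T)}$, and the (near-)reproduction of $\mathcal{S}^{0,q}_{\tria^{\delta_i},0}$ by $P_q^{\delta_i}$. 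The resulting local estimate $\|\nabla_{\bf x} P_q^{\delta_i} v\|_{L_2(T)}\lesssim \|\nabla_{\bf x} v\|_{L_2(\omega_T)}$ sums to the global $H^1$-stability, and then $\sup_{\eps>0,\delta}\|Q^\delta\|_{\cL(Y,Y)}\lesssim 1$, completing the argument.
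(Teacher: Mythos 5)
Your overall strategy coincides with the paper's: reduce $\inf_{\eps>0}\gamma_\Delta^C(\eps)>0$ via Proposition~\ref{39} to a uniformly bounded Fortin operator of slab-wise tensor-product form $Q^{\delta,i}\otimes Q_{\bf x}^{\delta_i}$, use the inclusion $C X^\delta|_{\text{slab}}\subseteq \mathcal{P}_q\otimes(\mathcal{S}^{0,q}_{\tria^{\delta_i},0}+\mathcal{S}^{-1,q-1}_{\tria^{\delta_i}})$ (valid since constant ${\bf b}$ gives $\divv_{\bf x}{\bf b}=0$), and verify the orthogonality through the splitting $\identity-Q_t\otimes Q_{\bf x}=(\identity-Q_t)\otimes\identity+Q_t\otimes(\identity-Q_{\bf x})$. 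All of that is sound.

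The genuine gap is your choice of spatial projector and the claimed local bound $\|\nabla_{\bf x}P_q^{\delta_i}v\|_{L_2(T)}\lesssim\|\nabla_{\bf x}v\|_{L_2(\omega_T)}$. The operator $P_q^\delta$ of \eqref{35relax}--\eqref{34relax} is \emph{not} $H^1$-stable, and your proposed derivation relies on a ``(near-)reproduction of $\mathcal{S}^{0,q}_{\tria^{\delta},0}$ by $P_q^{\delta}$'' that is not among its properties: $P_q^\delta$ is a projector onto a subspace of $\mathcal{S}^{0,q}_{\tria_S^\delta,0}$ that need not contain $\mathcal{S}^{0,q}_{\tria^{\delta},0}$ — under the stronger variant \eqref{35} its range even consists of functions vanishing on $\cup_{T\in\tria^\delta}\partial T$. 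Concretely, if $v$ is nearly constant $c$ near $T$, the orthogonality $\ran(\identity-P_q^\delta)\perp_{L_2}\mathcal{S}^{-1,q-1}_{\tria^\delta}$ forces $P_q^\delta v$ to have mean $\approx c$ on $T$ while being bubble-like, so $\|\nabla_{\bf x}P_q^\delta v\|_{L_2(T)}\gtrsim h_T^{-1}|c|\,|T|^{1/2}$ whereas $\|\nabla_{\bf x}v\|_{L_2(\omega_T)}\approx 0$. This is exactly why the paper does not take $Q_{\bf x}^\delta=P_q^\delta$ but instead the composite $Q_{\bf x}^\delta=\breve{Q}_{\bf x}^\delta+P_q^\delta(\identity-\breve{Q}_{\bf x}^\delta)$, with $\breve{Q}_{\bf x}^\delta$ a (modified, hence $L_2$-bounded) Scott--Zhang quasi-interpolator onto $\mathcal{S}^{0,q}_{\tria_S^\delta,0}$. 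Then $\identity-Q_{\bf x}^\delta=(\identity-P_q^\delta)(\identity-\breve{Q}_{\bf x}^\delta)$ still carries the orthogonality \eqref{33}, while $P_q^\delta$ is only ever applied to the residual $(\identity-\breve{Q}_{\bf x}^\delta)v$, which is $O(\hbar_\delta)$ small in $L_2$; combining $\|\hbar_\delta^{-1}P_q^\delta\hbar_\delta\|_{\cL(L_2,L_2)}\lesssim 1$ (a consequence of the elementwise bound \eqref{34relax} and the $K$-mesh property) with the inverse inequality $\|\cdot\|_{H^1(\Omega)}\lesssim\|\hbar_\delta^{-1}\cdot\|_{L_2(\Omega)}$ on $\mathcal{S}^{0,q}_{\tria_S^\delta,0}$ then yields the uniform $H^1_{0,\Gamma}(\Omega)$-bound that your argument is missing. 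With this repair the rest of your proof goes through.
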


\begin{proof} As follows from Proposition~\ref{39} the statement $\inf_{\eps>0} \gamma_\Delta^C(\eps)>0$ is equivalent to existence of  $Q_C^\delta \in \cL(Y,Y)$ with
\be \label{conds}
\sup_{\eps>0,\,\delta \in \Delta}\|Q_C^\delta\|_{\cL(Y,Y)} <\infty,\,
\ran Q_C^\delta \subset Y^\delta,\,
  \int_I \int_\Omega ((\partial_t +{\bf b} \cdot \nabla_{\bf x}) X^\delta) (\identity-Q_C^\delta) Y\dif {\bf x}\dif t=0,
\ee
  where we recall that, thanks to constant ${\bf b}$, $Y=L_2(I;H^1_{0,\Gamma}(\Omega))$ is equipped with norm
\begin{align*}
  \sqrt{(A_s(\eps) v)(v)}&= \sqrt{\int_I \eps\| \nabla_{\bf x} v\|_{L_2(\Omega)^d}^2+ e\|v\|_{L_2(\Omega)}^2\dif t} \\
  &\eqsim\sqrt{\eps} \|\nabla_{\bf x} v\|_{L_2(I \times \Omega)^d}+\sqrt{e} \| v\|_{L_2(I;L_2(\Omega))}.
\end{align*}

It holds that
\be \label{incl}
\hspace*{-0.3em}(\partial_t \!+\!{\bf b} \cdot \nabla_{\bf x}) X^\delta \subseteq
\big\{v \!\in\! L_2(I\! \times\! \Omega)\colon v|_{(t_{i-1}^\delta,t_i^\delta)} \!\in\! {\mathcal P}_q(t_{i-1}^\delta,t_i^\delta)\! \otimes\!
(\mathcal{S}^{0,q}_{\tria^{\delta_i},0}\!+\! \mathcal{S}^{-1,q-1}_{\tria^{\delta_i}})\big\}.\hspace*{-0.5em}
\ee
Let $(Q_{\bf x}^\delta)_{\delta \in \bar{\Delta}}$ denote a family of projectors such that
\begin{align} \label{33a}
&\sup_{\delta \in \Delta} \max\big( \|Q_{\bf x}^\delta\|_{\cL(L_2(\Omega),L_2(\Omega))},  \|Q_{\bf x}^\delta\|_{\cL(H^1_{0,\Gamma}(\Omega),H^1_{0,\Gamma}(\Omega))}\big)<\infty,\\ \label{33}
&\ran Q_{\bf x}^\delta \subset \mathcal{S}^{0,q}_{\tria_S^\delta,0},\quad \ran(\identity-Q_{\bf x}^\delta) \perp_{L_2(\Omega)}
\big(\mathcal{S}^{0,q}_{\tria^\delta,0}+\mathcal{S}^{-1,q-1}_{\tria^\delta}\big),
\end{align}
and let $Q^{\delta,i}$ be the $L_2(t_{i-1}^\delta,t_{i}^\delta)$-orthogonal projector onto ${\mathcal P}_q(t_{i-1}^\delta,t_{i}^\delta)$. Then, the operator
$Q_C^\delta$, defined by
$$
(Q_C^\delta v)|_{(t_{i-1}^\delta,t_{i}^\delta)\times \Omega}=(Q^{\delta,i} \otimes Q_{\bf x}^{\delta_i}) v|_{(t_{i-1}^\delta,t_{i}^\delta)\times \Omega},
$$
  satisfies \eqref{conds}. Indeed its uniform boundedness w.r.t.~the energy-norm on $Y$ follows by the boundedness of $Q_{\bf x}^\delta$ w.r.t.~both the $L_2(\Omega)$- and $H^1(\Omega)$-norms.
By writing $\identity-Q^{\delta,i} \otimes Q_{\bf x}^{\delta_i}=(\identity-Q^{\delta,i})\otimes \identity+Q^{\delta,i}\otimes(\identity-Q_{\bf x}^{\delta_i})$, and using \eqref{incl}
one verifies the third condition in \eqref{conds}.

We seek $Q_{\bf x}^\delta$ of the form $Q_{\bf x}^\delta=\breve{Q}_{\bf x}^{\delta}+\hat{Q}_{\bf x}^{\delta}+\hat{Q}_{\bf x}^{\delta} \breve{Q}_{\bf x}^{\delta}$
where
\be \label{36}
\ran \breve{Q}_{\bf x}^{\delta}, \ran \hat{Q}_{\bf x}^{\delta} \subset \mathcal{S}^{0,q}_{\tria_S^\delta,0},\quad
 \ran (\identity-\hat{Q}_{\bf x}^{\delta}) \perp_{L_2(\Omega)}
(\mathcal{S}^{0,q}_{\tria^\delta,0}+\mathcal{S}^{-1,q-1}_{\tria^\delta}).
\ee
Then from $\identity- Q_{\bf x}^\delta=(\identity- \hat{Q}_{\bf x}^{\delta})(\identity- \breve{Q}_{\bf x}^{\delta})$,
we infer that \eqref{33} is satisfied.

We take $\hat{Q}_{\bf x}^{\delta}=P_q^\delta$ from \eqref{35relax}--\eqref{34relax}.
It satisfies the properties required in \eqref{36}.
With $\hbar_\delta$ being the piecewise constant function defined by $\hbar_\delta|_{T}=\diam T$ $(T \in \tria^\delta)$,
thanks to the uniform $K$-mesh property of $\tria \in (\tria^\delta)_{\delta \in \bar{\Delta}}$,
\eqref{34relax} implies that
$\|\hbar_\delta^{-1} P_q^\delta \hbar_\delta \|_{\cL(L_2(\Omega),L_2(\Omega))}  \lesssim 1$,
as well as $\|P_q^\delta\|_{\cL(L_2(\Omega),L_2(\Omega))}  \lesssim 1$.

We take $\breve{Q}_{\bf x}^{\delta}$ as a modified Scott-Zhang quasi-interpolator onto $\mathcal{S}^{0,q}_{\tria_S^\delta,0}$ (\cite[Appendix]{75.527}). The modification consists in setting the degrees of freedom on $\Gamma$ to zero.
When applied to a function from $H^1_{0,\Gamma}(\Omega)$ it equals the original Scott--Zhang interpolator (\cite{247.2}), but thanks to the modification it is uniformly bounded w.r.t.~$L_2(\Omega)$, and so $\|Q_{\bf x}^\delta\|_{\cL(L_2(\Omega),L_2(\Omega))}$ is uniformly bounded.

Writing $Q_{\bf x}^\delta=\breve{Q}_{\bf x}^{\delta}+P_q^\delta(\identity-\breve{Q}_{\bf x}^{\delta})$,
from $\hbar_\delta^{-1}(\identity-\breve{Q}_{\bf x}^{\delta}) \in \cL(H^1_{0,\Gamma}(\Omega),L_2(\Omega))$,
$\hbar_\delta^{-1} P_q^\delta \hbar_\delta \in \cL(L_2(\Omega),L_2(\Omega))$, and
$\breve{Q}_{\bf x}^{\delta} \in \cL(H^1_{0,\Gamma}(\Omega),H^1_{0,\Gamma}(\Omega))$ all being uniformly bounded,
and $\|\cdot\|_{H^1(\Omega)} \lesssim \|\hbar_\delta^{-1} \cdot\|_{L_2(\Omega)}$ on $\mathcal{S}^{0,q}_{\tria_S^\delta,0}$, we infer the uniform boundedness of $ \|Q_{\bf x}^\delta\|_{\cL(H^1_{0,\Gamma}(\Omega),H^1_{0,\Gamma}(\Omega))}$.
\end{proof}

Next under the condition that $\essinf (e -\frac12 \divv_{\bf x} {\bf b})>0$, we consider the case of \emph{variable} ${\bf b}$ and $e$.
The scaling argument that was applied directly below Theorem~\ref{thm0} shows that
it is no real restriction to assume that $\essinf (e -\frac12 \divv_{\bf x} {\bf b})>0$.
Although we will not be able to show $\inf_{\eps>0} \gamma_\Delta^C(\eps)>0$, this inf-sup condition will be valid modulo a perturbation which can be dealt with using Young's inequality similarly as in the proofs of Theorems~\ref{thm:mainMRM} and ~\ref{thm:mainBEN}. It will result in $\eps$- (and $\beta$-) robust quasi-optimality results for MR and BEN similar as for constant ${\bf b}$ and constant $e \geq 0$.

\begin{theorem} \label{thm:t-dependent} Consider the situation of Theorem~\ref{thm:robust1}, but now without the assumption of ${\bf b}$ and $e$ being constants. Assume ${\bf b} \in W_\infty^1(I \times \Omega)^d$, $\essinf (e -\frac12 \divv_{\bf x} {\bf b}) >0$,
and, only for the case that ${\bf b}$ is time-dependent,
\be \label{14}
|t_{i-1}^\delta-t_i^\delta| \lesssim \max_{T \in \tria^{\delta_i}} \diam(T).
\ee
Then for MR and BEN it holds
\begin{align*}
\nrm u-u^\delta\nrm_X &\lesssim
{\textstyle \frac{\max(R_\Delta,1)}{\min(r_\Delta,1)}}
 \inf_{w \in X^\delta} \nrm u-w\nrm_X,\\
\nrm u-\bar{u}^\delta\nrm_X &\lesssim
\inf_{w \in X^\delta} \nrm u-w\nrm_X+ \inf_{v \in Y^\delta} \|u-v\|_Y,
\end{align*}
 \emph{uniformly} in $\eps>0$ and $\beta \geq 1$.
 \end{theorem}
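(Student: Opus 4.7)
The plan is to reduce to the constant-coefficient situation of Theorem~\ref{thm:robust1} by freezing $\mathbf{b}$ cellwise and then absorbing the resulting residual by a Young's inequality, in the same spirit as $A_a$ is handled in the proofs of Theorems~\ref{thm:mainMRM} and \ref{thm:mainBEN}.

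First, let $\mathbf{b}_\delta$ be the vector field that is piecewise constant on each space-time cell $(t_{i-1}^\delta,t_i^\delta)\times T$ ($T\in \tria^{\delta_i}$), for instance equal to the midpoint value of $\mathbf{b}$. From $\mathbf{b}\in W^1_\infty(I\times\Omega)^d$ together with \eqref{14} in the time-dependent case, one obtains $\|\mathbf{b}-\mathbf{b}_\delta\|_{L_\infty((t_{i-1}^\delta,t_i^\delta)\times T)}\lesssim \diam T$. Setting $C_0 w:=\partial_t w+\mathbf{b}_\delta\cdot\nabla_{\mathbf{x}}w$, the inclusion \eqref{incl} remains valid with $C$ replaced by $C_0$, since $\mathbf{b}_\delta\cdot\nabla_{\mathbf{x}}$ still maps $\mathcal{S}^{0,q}_{\tria^{\delta_i},0}$ cellwise into $\mathcal{S}^{-1,q-1}_{\tria^{\delta_i}}$ and $\divv_{\mathbf{x}}\mathbf{b}_\delta=0$ in each cell interior. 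The Fortin operator $Q_C^\delta=Q^{\delta,i}\otimes Q_{\mathbf{x}}^{\delta_i}$ built in the proof of Theorem~\ref{thm:robust1} therefore produces $\gamma_\Delta^{C_0}(\eps)\gtrsim 1$ uniformly in $\eps>0$, with all relevant bounds on $Q_{\mathbf{x}}^{\delta_i}$ and $Q^{\delta,i}$ carrying over verbatim (the uniform $L_2$-coercivity of $\|\cdot\|_Y$ supplied by $\essinf(e-\tfrac12\divv_{\mathbf{x}}\mathbf{b})>0$ replaces the role of constant $e\ge 0$).

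Second, bound the residual $(C-C_0)w=(\mathbf{b}-\mathbf{b}_\delta)\cdot\nabla_{\mathbf{x}}w+\tfrac12(\divv_{\mathbf{x}}\mathbf{b})w$ for $w\in X^\delta$. The cellwise polynomial inverse estimate $\|\nabla_{\mathbf{x}}w\|_{L_2(T)}\lesssim (\diam T)^{-1}\|w\|_{L_2(T)}$ exactly cancels the $\diam T$-factor from $\mathbf{b}-\mathbf{b}_\delta$, and the $L_2$-coercivity $\|\cdot\|_Y^2\gtrsim\|\cdot\|_{L_2(I\times\Omega)}^2$ furnished by $\essinf(e-\tfrac12\divv_{\mathbf{x}}\mathbf{b})>0$ controls the zero-th order contribution; this yields $\|(C-C_0)w\|_{Y'}\lesssim\|w\|_Y$ uniformly in $\eps>0$ and $\delta$. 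Combined with $\|{E_Y^\delta}' C_0 E_X^\delta w\|_{{Y^\delta}'}\gtrsim\|C_0 w\|_{Y'}$ and two applications of the triangle inequality, this gives the perturbed inf-sup
\[
\|{E_Y^\delta}' C E_X^\delta w\|_{{Y^\delta}'}\ge \kappa_1 \|C w\|_{Y'}-\kappa_2\|w\|_Y\qquad(w\in X^\delta),
\]
with $\kappa_1,\kappa_2>0$ independent of $\eps$ and $\delta$.

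Finally, substitute the squared form of this bound into the chain \eqref{e8} (and into the corresponding Schur-complement estimate inside the proof of Theorem~\ref{thm:mainBEN}). The Young's inequality $(\kappa_1 a-\kappa_2 b)^2\ge (1-\eta^2)\kappa_1^2 a^2-(\eta^{-2}-1)\kappa_2^2 b^2$ with $\eta^2$ in the nonempty interval $(\kappa_2^2/(1+\kappa_2^2),1)$ makes the negative $\|w\|_Y^2$ contribution strictly dominated by the $+\|E_X^\delta w^\delta\|_Y^2$ term already on the left-hand side of \eqref{e8}, and delivers $(G(\delta)w^\delta)(w^\delta)\gtrsim \nrm E_X^\delta w^\delta\nrm_X^2$ with absolute constants uniform in $\eps>0$ and $\beta\ge 1$. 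The upper bounds \eqref{e6}--\eqref{e7}, and correspondingly $\|G(\delta)\|_{\cL(X,X')}\le 1$, $\|C\|_{\cL(X,Y')}\le 1$ for BEN, do not use the inf-sup constant and transfer verbatim. The claimed quasi-optimality estimates for MR and BEN then follow. The main obstacle is precisely the $\eps$-uniformity of the perturbation bound: any route passing through $\|\nabla_{\mathbf{x}}w\|_{L_2}\lesssim\|w\|_Y/\sqrt{\eps}$ would destroy robustness, which is why both the $W^1_\infty$-regularity of $\mathbf{b}$ (providing the $h$-factor that cancels the inverse estimate) and the strict positivity of $\essinf(e-\tfrac12\divv_{\mathbf{x}}\mathbf{b})$ (providing $\eps$-independent $L_2$-control) enter essentially.
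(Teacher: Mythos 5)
Your proposal is correct and follows essentially the same route as the paper's proof: freeze $\mathbf{b}$ cellwise (the paper uses cellwise averages rather than midpoint values, which is immaterial), invoke the Fortin construction of Theorem~\ref{thm:robust1} for the frozen operator, bound the commutator $({\bf b}-{\bf b}_\delta)\cdot\nabla_{\bf x}w+\tfrac12 w\,\divv_{\bf x}{\bf b}$ in $L_2$ via the inverse estimate and the $L_2$-coercivity supplied by $\essinf(e-\tfrac12\divv_{\bf x}{\bf b})>0$, and absorb the resulting perturbation in \eqref{e8} by Young's inequality. The only cosmetic difference is that the paper optimizes over $\eta$ explicitly to get a closed-form constant, whereas you merely exhibit a nonempty admissible interval for $\eta$; both suffice.
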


 \begin{proof} As in the proof of Theorem~\ref{thm13}, we follow the proofs of Theorems~\ref{thm:mainMRM} (MR) and \ref{thm:mainBEN} (BEN). We only need to adapt the derivation of a lower bound for the expression in the second line of \eqref{e8}.

 With $\xi=\essinf (e -\frac12 \divv_{\bf x} {\bf b})$, it holds that
 $$
 \sqrt{\xi}\|\cdot\|_{Y'} \leq \|\cdot\|_{L_2(I \times \Omega)} \leq {\textstyle \frac{1}{\sqrt{\xi}}} \|\cdot\|_Y.
 $$
 Let ${\bf b}_{\delta}$ be the piecewise constant vector field defined by taking the average of ${\bf b}$ over each prismatic element $(t_{i-1}^\delta,t_i^\delta) \times T$ for $T \in \tria^{\delta_i}$.
 We use $w \mapsto {\bf b}_{\delta} \cdot \nabla_{\bf x} w$ to approximate $A_a$.
We have $\|{\bf b}-{\bf b}_{\delta}\|_{L_\infty((t_{i-1}^\delta,t_i^\delta) \times T)^d} \lesssim \diam(T)\|{\bf b}\|_{W_\infty^1((t_{i-1}^\delta,t_i^\delta) \times T)^d}$ by \eqref{14}.
 An application of the inverse inequality on the family of spaces $(\cS_{\tria,0}^{0,q})_{\tria \in \bar{\Delta}}$ shows that
for some constant $L>0$, for $w \in X^\delta$ it holds that
 $$
 \|({\bf b}-{\bf b}_{\delta})\cdot \nabla_{\bf x} w+{\textstyle \frac12} w \divv_{\bf x} {\bf b}\|_{L_2(I\times\Omega)} \leq L \|w\|_{L_2(I \times \Omega)}.
 $$

 Because \eqref{incl} is also valid for \emph{piecewise} constant ${\bf b}$, and
 $$
   \sqrt{(A_s(\eps) v)(v)} \eqsim \sqrt{\eps} \|\nabla_{\bf x} v\|_{L_2(I \times \Omega)^d}+\sqrt{\xi} \| v\|_{L_2(I;L_2(\Omega))},
$$
 only dependent on $\|e -\frac12 \divv_{\bf x} {\bf b}\|_{L_\infty(I \times \Omega)}/\xi$,
  the proof of Theorem~\ref{thm:robust1} shows that for some constant $\gamma>0$, for $w \in X^\delta$ it holds that
 $$
 \|{E_Y^\delta}' (\partial_t+{\bf b}_{\delta} \cdot \nabla_{\bf x})E_Y^\delta w\|_{{Y^\delta}'} \geq \gamma
  \|(\partial_t+{\bf b}_{\delta} \cdot \nabla_{\bf x})E_Y^\delta w\|_{Y'}.
  $$

  By combining these estimates, we find that for $w \in X^\delta$ it holds that
  \begin{align*}
  \|{E_Y^\delta}' C E_Y^\delta w\|_{{Y^\delta}'} & \geq  \gamma \|(\partial_t+{\bf b}_{\delta} \cdot \nabla_{\bf x})E_Y^\delta w\|_{Y'} -{\textstyle \frac{L}{\sqrt{\xi}}} \|E_Y^\delta w\|_{L_2(I \times \Omega)}\\
  &  \geq
  \gamma \|C E_Y^\delta w\|_{Y'} -(\gamma+1) {\textstyle \frac{L}{\sqrt{\xi}}} \|E_Y^\delta w\|_{L_2(I \times \Omega)}\\
   &  \geq
  \gamma \|C E_Y^\delta w\|_{Y'} -(\gamma+1) {\textstyle \frac{L}{\xi}} \|E_Y^\delta w\|_{Y},
  \end{align*}
  and so
    \begin{align*}
     &\|{E_Y^\delta}' C E_Y^\delta w\|_{{Y^\delta}'}^2+\|E_X^\delta w\|_Y^2+\|\gamma_T E_X^\delta w\|^2+(\beta-1)\|\gamma_0 E_X^\delta w\|^2\\
 &  \geq \big( \gamma \|C E_Y^\delta w\|_{Y'} \!-\!(\gamma+1) {\textstyle \frac{L}{\xi}} \|E_Y^\delta w\|_{Y}\big)^2+\|E_X^\delta w\|_Y^2+\|\gamma_T E_X^\delta w\|^2 +(\beta-1)\|\gamma_0 E_X^\delta w\|^2\\
  & \geq (1-\eta^2) \gamma^2 \|C E_Y^\delta w\|_{Y'}^2+\big\{(1-\eta^{-2})(\gamma+1)^2 {\textstyle \frac{L^2}{\xi^2}}+1\big\}
  \|E_X^\delta w\|_Y^2+\\
  & \hspace*{7.8cm}\|\gamma_T E_X^\delta w\|^2 +(\beta-1)\|\gamma_0 E_X^\delta w\|^2.
  \end{align*}
 Minimizing over $\eta$ shows that, with  $\alpha^2:=(\gamma+1)^2 {\textstyle \frac{L^2}{\xi^2}}$, the last expression is greater than or equal to
 $$
 {\textstyle \frac12}\Big(\gamma^2+\alpha^2+1-\sqrt{(\gamma^2+\alpha^2+1)^2-4 \gamma^2}\Big)
 \nrm E_X^\delta w\nrm_X^2,
 $$
which completes the proof.
  \end{proof}

 \new{The undesirable condition \eqref{14} for time-dependent ${\bf b}$ might be pessimistic in practice, which however we have not tested so far.}

  \subsection{Robust a posteriori error estimation} \label{Sapost}
  A robust error estimator will be realized in the following limited setting.

 Consider the spaces and bilinear form $a$ as in \eqref{bil}, where ${\bf b}$ is constant, $e=0$, and the polytope $\Omega \subset \R^d$ is convex.
For families of quasi-uniform partitions $(I^\delta)_{\delta \in \Delta}$ of $\overline{I}$, and
 $(\tria^\delta)_{\delta \in \Delta}$ and  $(\tria_S^\delta)_{\delta \in \Delta}$ of $\overline{\Omega}$ as before, where $\tria_S^\delta$ is a sufficiently deep refinement of $\tria^\delta$ that permits the construction of a projector $P_1^\delta$ that satisfies \eqref{34relax}--\eqref{35},
 and for some $h_\delta>0$, $\diam T \eqsim h_\delta \eqsim \diam J$ ($T \in \tria^\delta,\,J \in I^\delta$),
 let $X^\delta:=S^{0,1}_{I^\delta} \otimes S^{0,1}_{\tria^\delta,0}$ and $Y^\delta:=S^{-1,1}_{I^\delta} \otimes S^{0,1}_{\tria_S^\delta,0}$.
 For completeness, $S^{-1,1}_{I^\delta}$ denotes the space of piecewise linears w.r.t.~$I^\delta$, and $S^{0,1}_{I^\delta}$ the space of continuous piecewise linears w.r.t.~$I^\delta$.

 In this setting, in \cite[Thm.~5.6]{58.6} projectors $Q_B^\delta \in \cL(Y,Y)$ have been constructed with $\ran Q_B^\delta \subset Y^\delta$ and $(\identity-{ Q_B^\delta }') B X^\delta=0$.
 Moreover, these $Q_B^\delta$ are uniformly bounded in $Y=L_2(I;H^1_{0,\Gamma}(\Omega))$ equipped with the standard Bochner norm, with $H^1_{0,\Gamma}(\Omega)$ being equipped with $\|\nabla \cdot\|_{L_2(\Omega)^d}$.
 Since for the current bilinear form $a$, the energy-norm $\|\cdot\|_Y$ is equal to  $\sqrt{\eps}\|\cdot\|_{L_2(I;H^1_{0,\Gamma}(\Omega))}$, it holds that $\sup_{\delta \in \Delta,\,\eps>0} \|Q_B^\delta\|_{\cL(Y,Y)}<\infty$, and so
 $$
 \inf_{\eps>0} \gamma_\Delta^B(\eps)>0.
 $$

Let $((\hat{K}_Y^\delta)^{-1}v)(v) \eqsim \int_I \int_\Omega |\nabla_{\bf x} v|^2 \dif {\bf x} \dif t$ ($\delta \in \Delta,\, v \in Y^\delta$), then $(\eps^{-1} \hat{K}_Y^\delta)^{-1} \eqsim {E_Y^\delta}' A_s E_Y^\delta$, i.e., using preconditioner $K_Y^\delta:=\eps^{-1} \hat{K}_Y^\delta$ it holds that $\sup_{\eps>0}\frac{\max(R_\Delta,1)}{\min(r_\Delta,1)}<\infty$.

 What remains is to show that \emph{data-oscillation} is asymptotically of higher or equal order as the approximation error
 in $\nrm\cdot\nrm_X=\sqrt{\|B\cdot\|_{Y'}^2+\beta\|\gamma_0\cdot\|^2}$. Noting that $\|\cdot\|_{Y'} = \frac{1}{\sqrt{\eps}} \|\cdot\|_{L_2(I;H^1_{0,\Gamma}(\Omega)')}$, it is natural to select
 $$
 \beta =\eps^{-1}.
 $$
 Then $\sqrt{\eps} \nrm  \cdot \nrm_X$ equals
 $$
 \sqrt{\|(\partial_t+{\bf b}\cdot\nabla_{\bf x})\cdot\|_{L_2(I;H^1_{0,\Gamma}(\Omega)')}^2+\eps^2\|\cdot\|_{L_2(I;H^1_{0,\Gamma}(\Omega))}^2+\eps \|\gamma_T \cdot\|^2+(1-\eps)\|\gamma_0\cdot\|^2},
 $$
 and so even for a general smooth $u$, $\sqrt{\eps}$ times the approximation error cannot be expected to be smaller than $\eqsim h_\delta^2$. Since for
 $g \in L_2(I;H^1(\Omega)) \cap H^2(I;H^{-1}(\Omega))$ it holds that
$\sqrt{\eps} \|(\identity-{Q_B^\delta}')g\|_{Y'}= \|(\identity-{Q_B^\delta}')g\|_{L_2(I;H^1_{0,\Gamma}(\Omega)')} \lesssim h_\delta^2$ (\cite[Thm.~5.6]{58.6}),
 we conclude that $\cE^\delta(w;g,u_0,\beta)$ from~\eqref{eqn:apost} is an efficient and, modulo above satisfactory data-oscillation term, reliable a posteriori estimator of the error in $w$ in $\nrm  \cdot \nrm_X$-norm.

\section{Numerical test} \label{Snumer}
We tested the minimal residual (MR)  method applied to the parabolic initial value
problem with the singularly perturbed `spatial component' as given in \eqref{bil}.
We considered the simplest case where $I=\Omega=(0,1)$, ${\bf b}=1$, and $e$ is
either $0$ or $1$, and $X^\delta=S_{I^\delta}^{0,1} \otimes S_{\tria^\delta,0}^{0,1}$,
where $I^\delta=\tria^\delta$ is a uniform partition of the unit interval with mesh
size $h_\delta$.  Taking always $(K_Y^\delta)^{-1}={E_Y^\delta}' A_s E_Y^\delta$,
we took either
{\renewcommand{\theenumi}{\roman{enumi}}
\begin{enumerate}
\item \label{i}
  $Y^\delta=S_{I^\delta}^{-1,1} \otimes S_{\tria^\delta,0}^{0,1} (\supseteq X^\delta \cup \partial_t X^\delta)$
  which for any \emph{fixed} $\eps>0$ gives $\gamma_\Delta^{\partial_t}>0$ (Sect.~\ref{full}),
  so that the MR approximations are quasi-optimal approximations from the trial space
  w.r.t.~$\|\cdot\|_X$ (Thm.~\ref{thm:mainMRM}), or
\item \label{ii}
  $Y^\delta:=S_{I^\delta}^{-1,1} \otimes S_{\tria_s^\delta,0}^{0,1}$ where $\tria_s^\delta$
  is a uniform partition with mesh-size $h_{\delta}/3$ which even gives $\inf_{\eps>0} \gamma_\Delta^{C}(\eps)>0$
  (Thm.~\ref{thm:robust1}), so that the MR approximations are quasi-optimal
  approximations from the trial space w.r.t.~the energy-norm $\nrm\cdot\nrm_X$
  also \emph{uniformly in $\eps>0$} (Thm.~\ref{thm13}).
\end{enumerate}
Remark~\ref{rem10} shows that in these cases the BEN and MR methods give the same solution.

As discussed in Sect.~\ref{Sapost}, for the case that $e=0$ it is natural to take
the weight $\beta=\eps^{-1}$.  Unlike with $e=0$, for $e=1$ and $0 \neq v \in Y$
the energy-norm $\sqrt{(A_s v)(v)}$ does not tend to zero for $\eps \downarrow 0$
but converges to $\|v\|_{L_2(I \times \Omega)}$. In view of this there is no reason
to let $\beta$ tend to infinity for $\eps \downarrow 0$, and we took $\beta=1$.

For $Y^\delta$ as in \eqref{ii}, in Sect.~\ref{Sapost} it was shown that for
$(e,\beta)=(0,\eps^{-1})$ it holds that $\inf_{\eps>0} \gamma_\Delta^{B}(\eps)>0$,
and more specifically that the a posteriori error estimator $\cE^\delta(w;g,u_0,\beta)$ from~\eqref{eqn:apost}
is an efficient and, modulo a data-oscillation term that is at least of equal order,
reliable estimator of the error $\nrm u-w \nrm_X$.  Therefore to assess our numerical
results, we used $Y^\delta$ as in Option~\eqref{ii} for error estimation,
even when solving with $Y^\delta$ as in \eqref{i}.

For $(e,\beta)=(1,1)$,  we numerically \emph{observed} that for our model problems
the a posteriori error estimator $\cE^\delta(w;g,u_0,\beta)$ computed with $Y^\delta$ as in \eqref{ii}
is efficient and reliable as, knowing that the estimator \emph{equals} $\nrm u - w\nrm_X$
for $Y^\delta = Y$, we saw that further overrefinement of the
test space $Y^\delta$ never increased the estimated error by more than a percent.
So again, regardless of whether we took $Y^\delta$
as in Option~\eqref{i} or \eqref{ii}, we used $Y^\delta$ as in \eqref{ii} to compute $\cE^\delta(w; g, u_0, \beta)$.

In experiments below, we choose $\eps = 1, 10^{-1}, 10^{-3}, 10^{-6}$; to compare different values of $\eps$,
we show the estimated error divided by an accurate approximation for $\sqrt{\|g\|_{Y'}^2+\beta \|u_0\|^2}$, which
is equal to the $\nrm\cdot\nrm_X$-norm of the exact solution.

\subsection{Smooth problem}
We take (homogeneous) Dirichlet boundary conditions at left- and right boundary, i.e.~$\Gamma=\partial\Omega$, select $(e, \beta) = (0, \eps^{-1})$, and prescribe the exact solution
$u(t,x) := (t^2 + 1) \sin(\pi x)$ with derived data $u_0$ and $g$.
For this problem, the best possible error in $\nrm\cdot\nrm_X$-norm, divided by $\nrm u\nrm_X$,
decays proportionally to $(\dim X^\delta)^{-1/2}$.

Figure~\ref{fig:smooth} shows this relative estimated error as a function of $\dim X^\delta$.
In accordance with Theorem~\ref{thm:mainMRM}, for this parabolic problem with non-symmetric spatial part, both Option~\eqref{i} and Option~\eqref{ii} give solutions that converge at the expected rate. For Option~\eqref{i}, however, this convergence is not uniform in $\eps$, but in accordance with Theorem~\ref{thm13}, for Option~\eqref{ii} it is.

\begin{figure}[ht]
  \includegraphics[width=\linewidth]{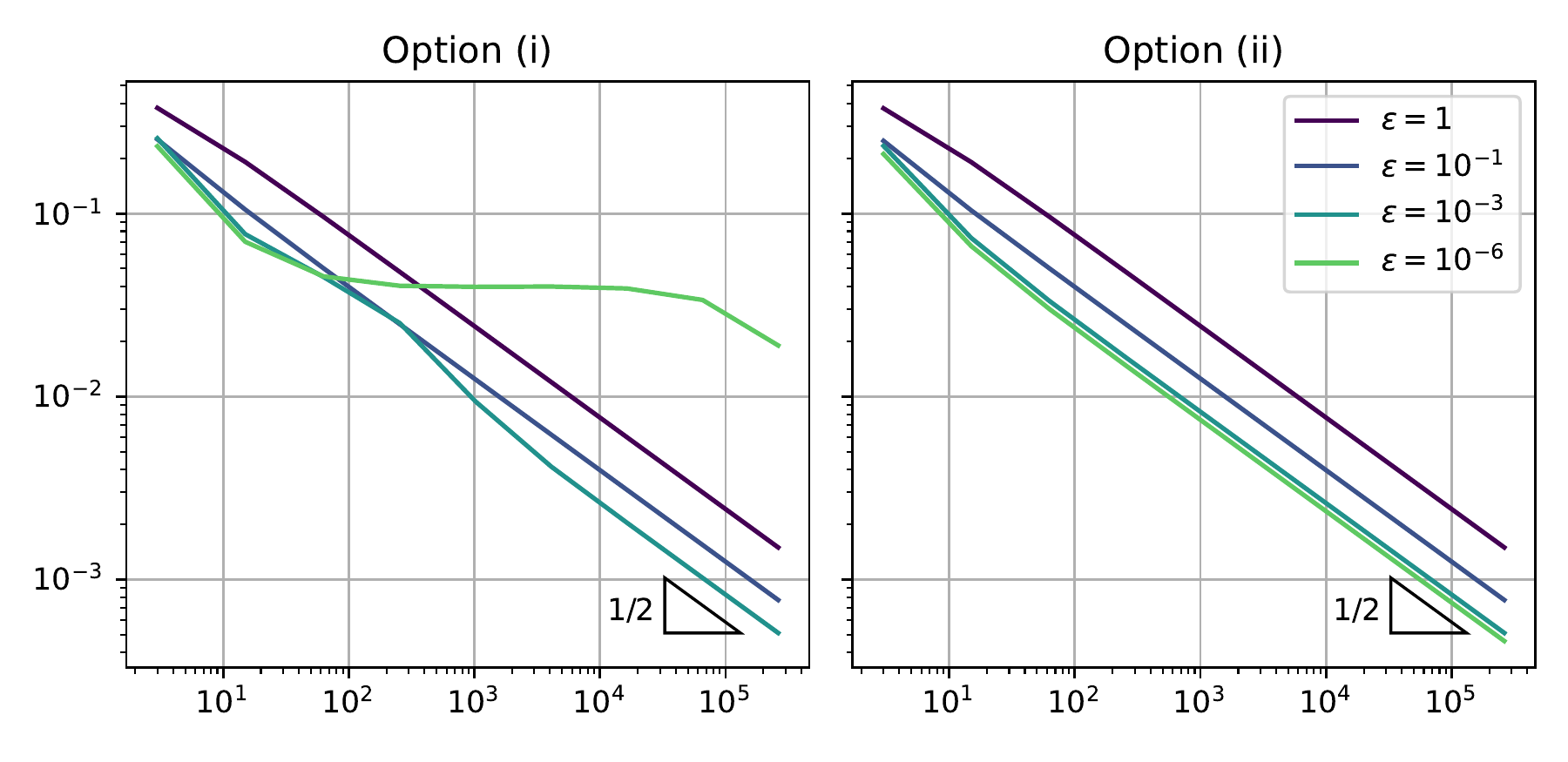}
  \caption{Relative estimated error progression for the \emph{smooth problem} as function of $\dim X^\delta$ for different diffusion rates $\eps$. Left: test space $Y^\delta$ as in Option~(\ref{i}); right: $Y^\delta$ as in (\ref{ii}).}
  \label{fig:smooth}
\end{figure}

\subsection{Internal layer problem}
We choose $u_0 := 0$ and $g(t,x) := \mathbbm{1}_{\{x > t\}}$, select $(e, \beta) = (0, \eps^{-1})$, and prescribe a homogeneous Dirichlet boundary condition only at the left boundary $x=0$, i.e.~$\Gamma := \{0\}$, and so have a Neumann boundary condition at the `outflow' boundary $x=1$. Due to the jump in the forcing data, in the limit $\eps \downarrow 0$, the solution $t \cdot \mathbbm{1}_{\{x>t\}}$ is discontinuous along the diagonal $x=t$.

The left of Figure~\ref{fig:internal} shows the relative estimated error progression of Option~(\ref{ii}) as a function of $\dim X^\delta$; as Option~(\ref{i}) again suffers from degradation for small $\eps$ (with results very similar to the left of Figure~\ref{fig:smooth}), we omit a graph of its error progression.
Its right shows the discrete solution at $h_\delta = \tfrac{1}{512}$ and $\eps=10^{-6}$. The solution resembles the \emph{pure transport} solution quite well, with the exception of a small artefact near $x=t=0$.

\begin{figure}[ht]
  \includegraphics[width=\linewidth]{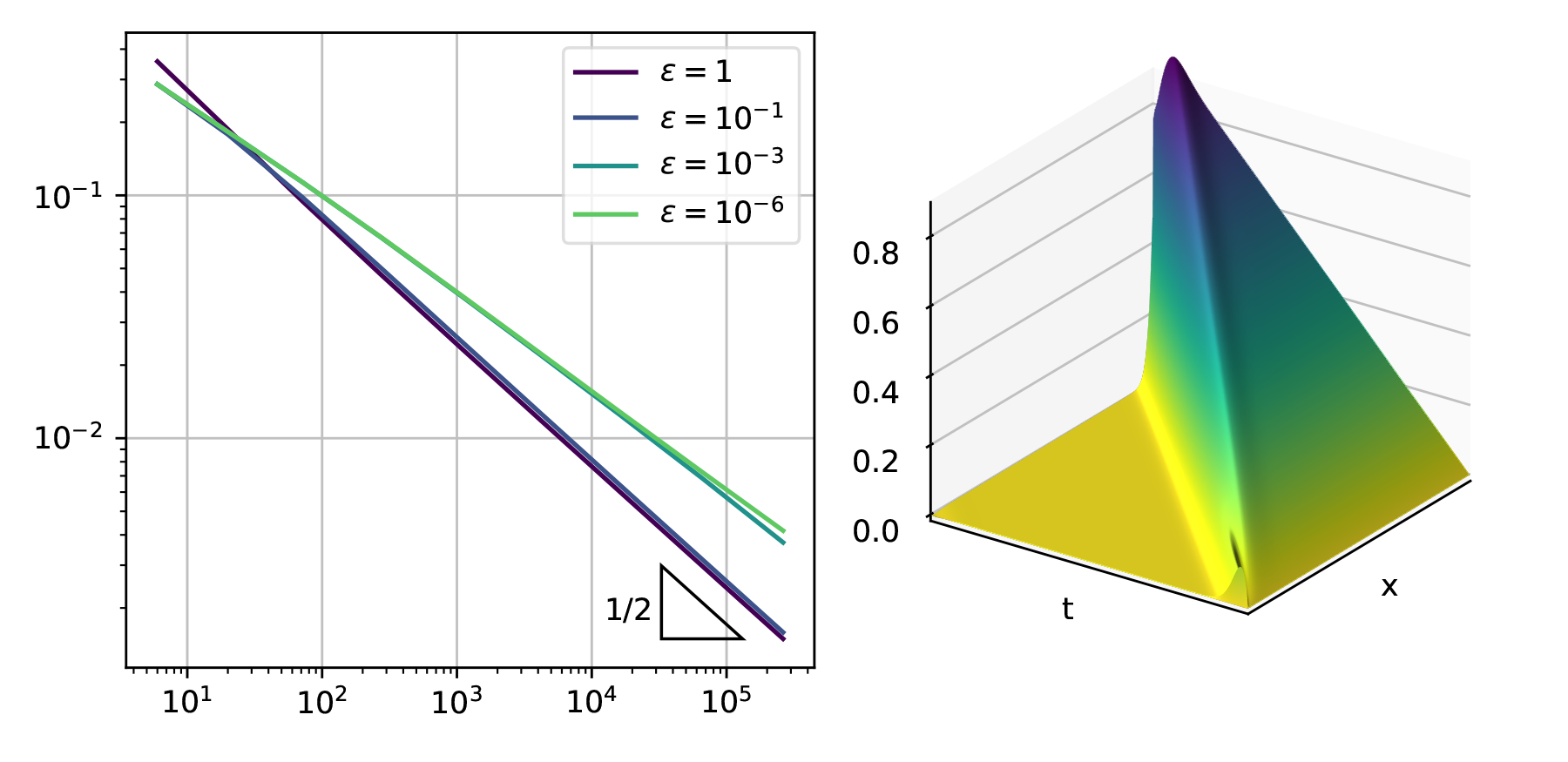}
  \caption{Solving the \emph{internal layer problem} with Option~(\ref{ii}). Left: relative estimated error progression as function of $\dim X^\delta$ for different diffusion rates $\eps$. Right: solution at $h_\delta = \tfrac{1}{512}$ and $\eps=10^{-6}$.}
  \label{fig:internal}
\end{figure}

\subsection{Boundary layer problem}
We choose $u_0(x) := \sin(\pi x)$ and $g = 0$, select $(e, \beta) = (1, 1)$, and set homogenous Dirichlet boundary conditions on $\partial \Omega$, i.e.~$\Gamma = \{0,1\}$.
Due to the condition on the outflow boundary, the problem is ill-posed in the limit $\eps = 0$, hence for $\eps$ small, the solution has a boundary layer at $x=1$.

Figure~\ref{fig:bdrlayer1} shows that the method fails to make progress
until the boundary layer is resolved at $h_\delta \lesssim \eps$.
Figure~\ref{fig:bdrlayer2} shows two discrete solutions at $h_\delta = \tfrac{1}{512}$ computed for Option~(\ref{ii}). We see that for $\eps=10^{-3}$, the boundary layer is resolved and the solution resembles the \emph{pure transport} solution quite well, with the exception of a small artefact near $x=t=1$.
For $\eps = 10^{-6}$ though, the boundary layer cannot be resolved with the current (uniform) mesh, and the solution is completely wrong.
For $\eps \downarrow 0$, the energy-norm of the error in an approximation $w$ approaches $\sqrt{\|(\partial_t+{\bf b}\cdot \nabla_{\bf x})w\|_{L_2(I \times \Omega)}^2+\|u_0-\gamma_0 w\|_{L_2(\Omega)}^2}$.
As a result, for streamlines that hit the outflow boundary, the method `chooses' to smear the unavoidably large error as a consequence of the layer along the whole streamline resulting in a globally bad approximation.
This is a well-known phenomenon when using a least squares method to approximate a solution that has a sharp layer or a shock.

\begin{figure}[ht]
  \includegraphics[width=\linewidth]{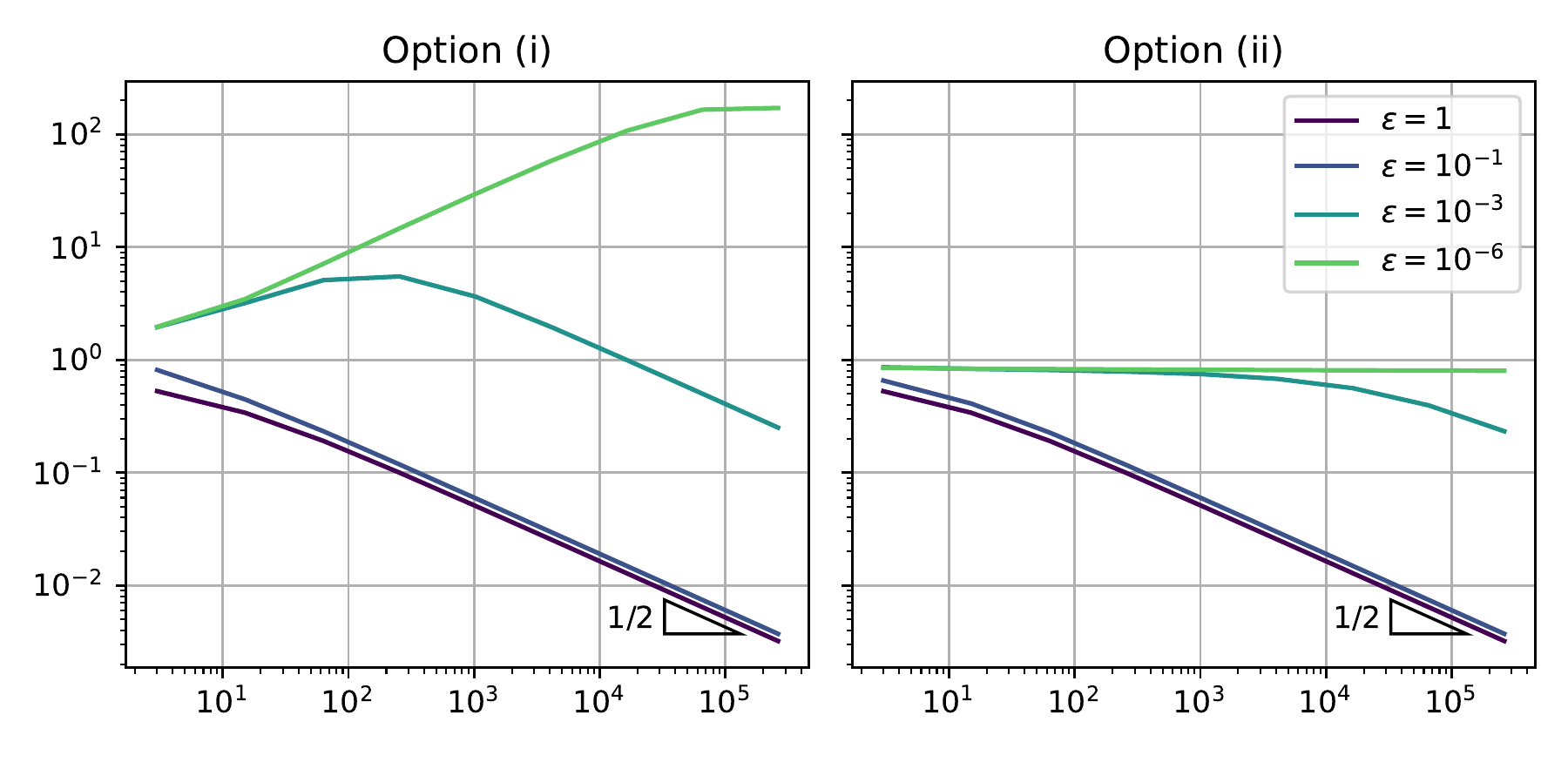}
  \caption{Relative estimated error progression for the \emph{boundary layer problem} as function of $\dim X^\delta$ for different diffusion rates $\eps$. Left: test space $Y^\delta$ as in Option~(\ref{i}); right: $Y^\delta$ as in (\ref{ii}).}
  \label{fig:bdrlayer1}
\end{figure}
\begin{figure}[ht]
  \includegraphics[width=\linewidth]{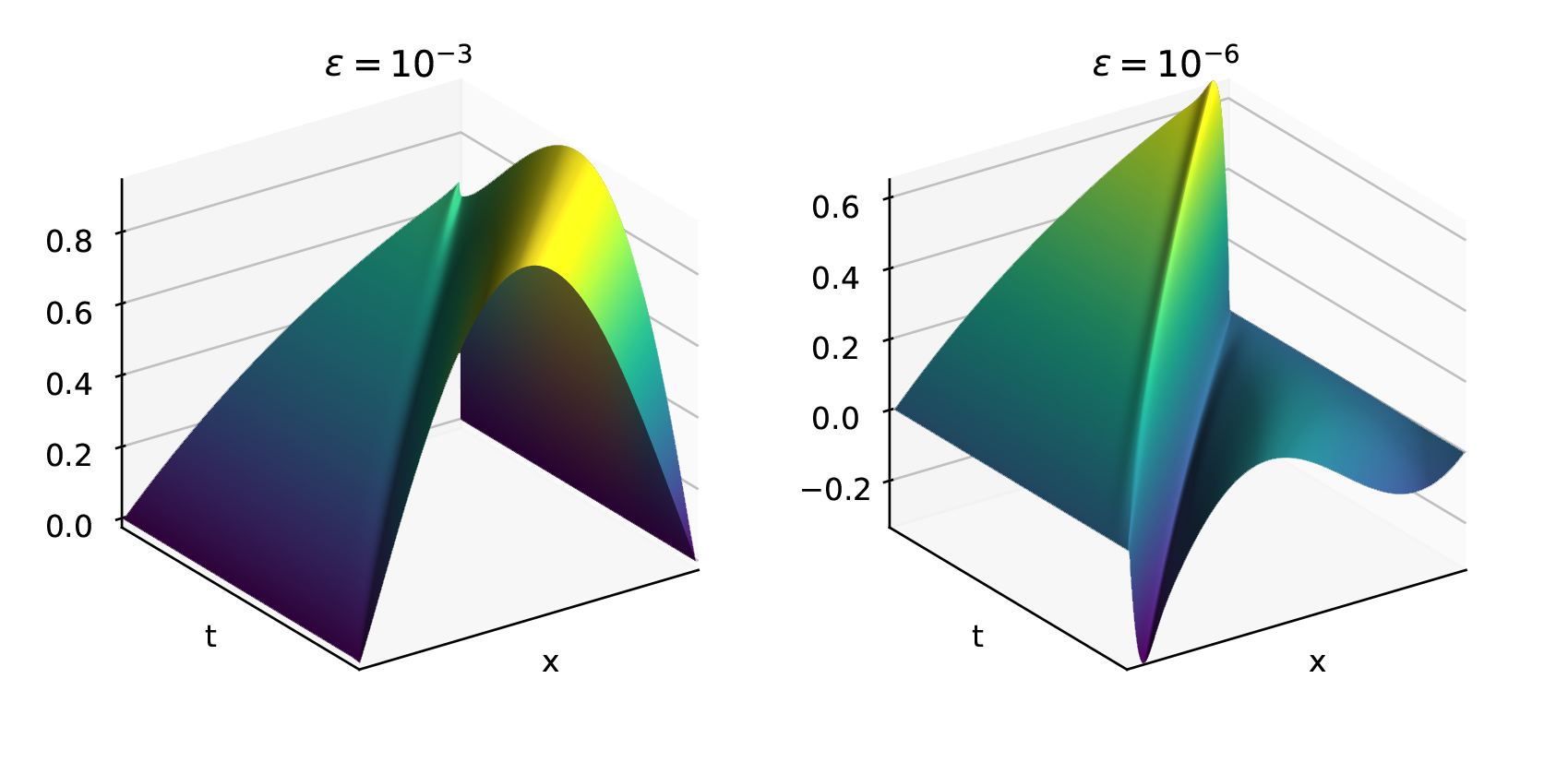}
  \caption{Solutions of the \emph{boundary layer problem} with Option~(\ref{ii}) at $h_\delta = \tfrac{1}{512}$. Left: diffusion $\eps = 10^{-3}$; right: $\eps = 10^{-6}$.}
  \label{fig:bdrlayer2}
\end{figure}

\subsection{Imposing outflow boundary conditions weakly}
One common work-around to the problem caused by the boundary layer is to refine the mesh strongly towards this layer.
An alternative is to impose at the outflow boundary the Dirichlet boundary condition only weakly, see e.g.~the references \cite{45.44,35.8565,35.9375,35.943} where this approach has been applied with least squares methods for stationary convection dominated convection-diffusion methods. \new{This approach is also known from other contexts, as in \cite{19.894,35.858,245.69}.}
Without having a rigorous analysis we tried \new{this weak imposition of the Dirichlet boundary condition }by computing, with $Y^\delta$ as in Option~(\ref{ii}),
\[
  u^\delta := \argmin_{w \in \hat{X}^\delta} \|{E_Y^\delta}'(BE_X^\delta w - g)\|_{{Y^\delta}'}^2 + \beta\|\gamma_0 E_X^\delta w - u_0\|^2 + \eps \|w(\cdot, 1)\|_{L_2(I)}^2.
\]
Here, $\hat{X}^\delta$ denotes the space $X^\delta$ after removing the Dirichlet boundary condition at $x=1$.
Figure~\ref{fig:bdrlayer3} shows the resulting error progression, which is robust in $\eps$, as well as the minimal residual solution at $h_\delta = \tfrac{1}{512}$ and $\eps=10^{-6}$; it resembles the \emph{pure transport} solution quite well, and does not suffer from the artifact present at the right of Figure~\ref{fig:bdrlayer2}.
\begin{figure}[ht]
  \includegraphics[width=\linewidth]{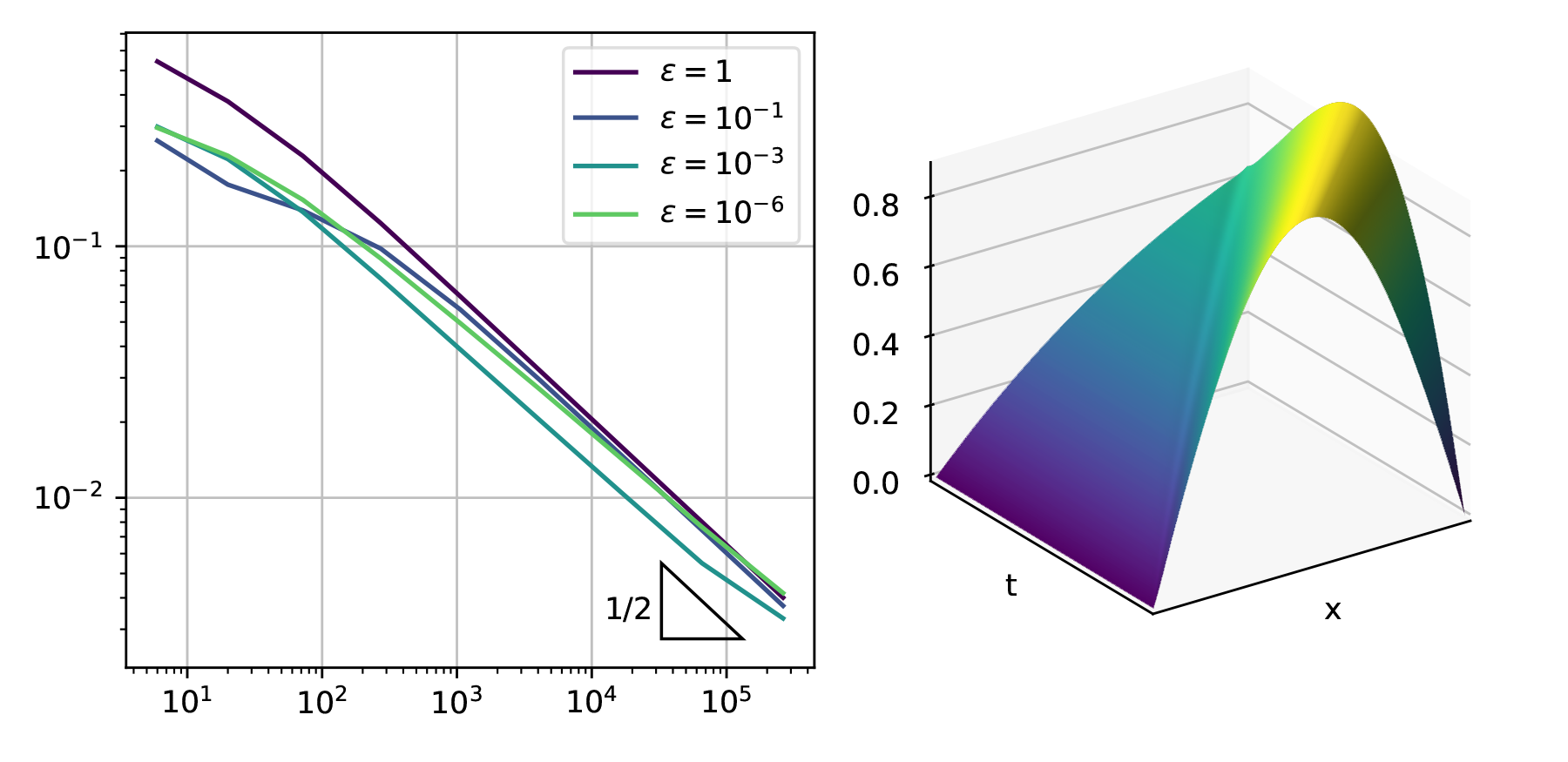}
  \caption{Solving the \emph{boundary layer problem} with Option~(\ref{ii}), and imposing the outflow boundary condition \emph{weakly}. Left: relative estimated error progression as function of $\dim \hat X^\delta$ for different diffusion rates $\eps$. Right: solution at $h_\delta = \tfrac{1}{512}$ and $\eps = 10^{-6}$.}
  \label{fig:bdrlayer3}
\end{figure}}


\end{document}